\newtheorem{theorem}{Theorem}[section]
\newtheorem{utv*}{Proposition}
\newtheorem{hyp*}{Conjecture}
\newtheorem{lemma}[theorem]{Lemma}
\newtheorem{defin}{Definition}
\newtheorem{zamech}{Remark}
\newtheorem*{th*}{Theorem}
\newcommand{\av}[2]{\langle #1\rangle_{_{\scriptstyle #2}}}
\newcommand{\ave}[1]{\langle #1\rangle}
\def\sli{\sum\limits}
\def\ili{\int\limits}
\def\la{\lambda}
\def\R{\mathbb{R}}
\def\ep{\varepsilon}
\def\vf{\varphi}
\def\Om{\Omega}
\def\cD{\mathcal{D}}
\newcommand{\cB}{\mathcal{B}}
\newcommand{\cG}{\mathcal{G}}
\newcommand{\al}{\alpha}
\newcommand{\cz}{Calder\'{o}n--Zygmund\ }
\newcommand{\pd}{\partial}
\def\cyr{\fontencoding{OT2}\fontfamily{wncyr}\selectfont}
\DeclareTextFontCommand{\textcyr}{\cyr}
\newcounter{vremennyj}
\begin{document}
\title[Bellman approach to the one-sided bumping]{Bellman approach to the one-sided bumping for weighted estimates of Calder\'on--Zygmund operators}
\author{Fedor Nazarov}
\address{Department of Mathematics, University of Wisconsin-Madison and Kent Sate University}
\email{nazarov@math.wisc.edu}
\thanks{Work of F.~Nazarov is supported  by the NSF grant}

\author{Alexander Reznikov}
\address{Department of Mathematics,  Michigan State University, East
Lansing, MI 48824, USA}
\email{reznikov@ymail.com}
\author{Alexander Volberg}
\thanks{Work of A.~Volberg is supported  by the NSF under the grant  DMS-0758552.
}
\address{Department of Mathematics, Michigan State University, East
Lansing, MI 48824, USA}
\email{volberg@math.msu.edu}
\urladdr{http://sashavolberg.wordpress.com}

\makeatletter
\@namedef{subjclassname@2010}{
  \textup{2010} Mathematics Subject Classification}
\makeatother

\subjclass[2010]{42B20, 42B35, 47A30}



%
%

\keywords{\cz operators, dyadic shift, Orlicz spaces, 
   weighted estimates}

   \begin{abstract}
We give again a proof of  weighted estimate of any \cz operator.  This is  under a universal sharp sufficient condition that is weaker than the so-called bump condition.  Bump conjecture was recently solved independently and simultaneously by A. Lerner and Nazarov--Reznikov--Treil--Volberg. The latter paper uses the Bellman approach. Immediately a very natural and seemingly simple question arises how to to strengthen the bump conjecture result by weakening its assumptions in a natuarl symmetric way. This is what we are dealing with here. However we meet an unexpected and, in our opinion, deep obstacle, that allows us to make only partial result.
Our proof consists of two main parts: reduction to a simple model operator, construction of Bellman function for estimating this simple operator. The newer feature is that the domain of definition of our Bellman function is infinitely dimensional.
\end{abstract}

\date{}
\maketitle

\section{Introduction}
In this paper we consider a question about the sufficiency of certain ``bump'' conditions for the boundedness of all Calder\'on-Zygmund operators. Precisely, we consider two functions $u, v$, positive almost everywhere, and ask a question:
\begin{equation}
\label{mainquestion}
\begin{aligned}
&\text{ When there  exists a constant}\; C,\;\text{ such that for every function} 
\\
&\qquad\qquad\qquad f\in C_{0}^{\infty} \quad \|T(fu)\|_{L^2(v)}\leqslant C \|f\|_{L^2(u)}?
\end{aligned}
\end{equation}
Of course the constant $C$ is assumed to be independent of $f$.
The famous ``joint $A_2$'' condition, necessary but not sufficient, was introduced by D. Sarason. It looked like this:
$$
\mbox{There exists a constant $C$, such that for any interval $I$ the following holds:}
$$
$$
\frac{1}{|I|}\ili_I u\,dx\; \;\frac{1}{|I|}\ili_I v\,dx \leqslant C\,.
$$

We would like to rewrite this condition in the following way:
\begin{equation}
\label{a2}
\|u\|_{L^1(I, \frac{dx}{|I|})} \cdot \|v\|_{L^1(I, \frac{dx}{|I|})} \leqslant C\,.
\end{equation}

It is well known that this condition is not sufficient for the boundedness of $T$ for interesting $T$ (like the Hilbert transform, or a dyadic shift). So, we want to consider a bigger left-hand side, to make the condition stronger. Thus, instead of the $L^1$-norm, we would like to put something bigger. This brings us to the notion of Orlicz norms.
\subsection{Orlicz norms}
Consider a function $\Phi$ that is increasing, and convex. Then define
$$
\|u\|_{L^\Phi_I} = \inf\{\la \colon \frac{1}{|I|}\ili_I \Phi\left(\frac{f(t)}{\la}\right) dt \leqslant 1\}. 
$$
Notice that $\Phi(t)=t$ gives the normalized $L^1$ norm, and $\Phi(t)=t^p$ gives the normalized $L^p$ norm.

\subsection{History of the question}
\label{history}

An interesting ``bump" conjecture was open for quite a while, and it was recently 
solved independently and at the same time by two quite distinct (but having some 
fundamental similarity) methods . One solution, due to Andrei Lerner, uses {\it local sharp maximal function} approach, see \cite{Le}. Another solution, due to 
Nazarov--Reznikov--Treil--Volberg \cite{NRTV} used a Bellman function technique, 
but with a new twist, the Bellman function of \cite{NRTV} depends on infinitely many variables. This bump conjecture is the statement that replaces
the left hand side of \eqref{a2} by  its ``bumped-up" version, conjecturing that this version is now sufficient for the boundedness of all \cz operators. Thus, it introduces the bumped $A_2$ condition:
\begin{equation}
\label{ba2}
\|u\|_{L^\Phi_I} \cdot \|v\|_{L^\Phi_I} \leqslant C\,.
\end{equation}
Of course $\Phi(t)=t$ is just the same as \eqref{a2}, so we need some condition (preferably sharp) on $\Phi$ 
to ensure the boundedness of interesting (actually of all) \cz operators. 
This condition (and this is known to be sharp) was invented by Carlos P\'erez and David Cruz-Uribe, \cite{CUP99}, \cite{CUPpisa}, and it is
\begin{equation}
\label{intPhi}
\Phi \;\text{is convex increasing function such that}\;\int_1^\infty \frac{dt}{\Phi(t)} <\infty\,.
\end{equation}

The bump conjecture itself (see \cite{CUP99}, \cite{CUPpisa}, \cite{CUP00b}, \cite{CUMaP07}, \cite{CUMaP12}, \cite{CUMaPbook}) reads now: given that two weights $u, v$ satisfy \eqref{ba2} and $\Phi$ in \eqref{ba2} satisfies \eqref{intPhi}, prove that
any \cz operator is bounded from $L^2(u)$ into $L^(v)$ in the sense \eqref{mainquestion} stated above.

This has been proved, as  we already mentioned in \cite{Le} and in \cite{NRTV}. 
However, a very natural conjecture is that \eqref{ba2} can be weakening even more. 
Namely, we want to bump only one weight at a time. We get the following quite natural one-sided bump assumption:
\begin{equation}
\label{1sba2}
\begin{aligned}
&\mbox{There exists a constant $C$, such that for any interval $I$ the following holds:} 
\\
&\|u\|_{L^\Phi_I} \cdot \|v\|_{L^1(I, \frac{dx}{|I|})} \leqslant C, \\
&\mbox{and}\\
&\|u\|_{L^1(I, \frac{dx}{|I|})} \cdot \|v\|_{L^\Phi_I} \leqslant C.
\end{aligned}
\end{equation}

And now one-sided bump conjecture is the following statement:
suppose \eqref{1sba2} holds for all intervals (cubes), and suppose $\Phi$ 
satisfies integrability condition \eqref{intPhi}, then any \cz operator is bounded from $L^2(u)$ into $L^2(v)$ in the sense \eqref{mainquestion} stated above.

The attempt to prove this has been done in \cite{CURV}. But we could do this only for {\it some} $\Phi$. 
The present article is another attempt. It is sort of different in technique, it uses a Bellman function method unlike \cite{CURV} that used a stopping time argument.

The present approach is slightly more propitious, because Bellman technique is ``reversible". 
We actually manage to prove by Bellman technique only slightly more general result than in \cite{CURV}. 
However, this ``reversibility" feature probably indicates that the one sided bump conjecture in full generality (for all $\Phi$ subject to \eqref{intPhi}) might be actually wrong.

Recently M. Lacey \cite{L} using a parallel corona argument generalized the results of this paper to the case $p\not=2$ and to a more general bump condition. 
\section{A construction from ~\cite{NRTV}}
\label{co}
To formulate the main result we use a certain language.

For that we need the following construction. Define a function $\Psi$ in the following parametric way:
$$
\begin{cases} 
s=\frac{1}{\Phi(t)\Phi'(t)} \\
\Psi(s):=\Phi'(t).
\end{cases}
$$
Of course, we define $\Psi$ in this way near $s=0$. 

We give the following definition.
\begin{defin}\label{regbumps}
A function $\Phi$ is called \textit{regular bump}, if for any function $u$ there holds
$$
\|u\|_{L^\Phi_I}\geqslant C\int N_I(t) \Psi(N_I(t))dt.
$$
\end{defin}
\begin{zamech}[\cite{NRTV}]
An example of regular bump is the following: $\Phi(t)=t\rho(t)$, and
$$
t\frac{\rho'(t)}{\rho(t)}\log\rho(t) \to 0, \; \mbox{as} \; t\to \infty.
$$
\end{zamech}
The important result is the following.
\begin{lemma}
The function $s\mapsto\Psi(s)$ is decreasing; the function $s\mapsto s\Psi(s)$ is increasing; the function $\frac{1}{s\Psi(s)}$ is integrable near $0$. 
Moreover, the following inequality is true with a uniform constant $C$ (which may depend only on $\Phi$):
$$
C\|u\|_{L^\Phi_I}\geqslant \int N_I(t) \Psi(N_I(t))dt,
$$
where 
$$
N_I(t)=\frac{1}{|I|} |\{x\in I\colon u(x)\geqslant t\}|.
$$

Further, for ``regular'' functions $\Phi$ we have that
$$
\|u\|_{L^\Phi_I}\sim \int N_I(t) \Psi(N_I(t))dt.
$$
\end{lemma}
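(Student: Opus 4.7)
The plan is to dispatch the three qualitative properties of $\Psi$ by direct calculation from the parametric definition, and to reduce the main integral inequality to a one-variable estimate for a decreasing function $g$ that I then handle by a dichotomy.

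For the three properties of $\Psi$, observe first that the parametrization $s = 1/(\Phi(\tau)\Phi'(\tau))$, $\Psi(s) = \Phi'(\tau)$, is a decreasing bijection of $(0,\infty)$ onto itself, because $\Phi\Phi'$ is strictly increasing by convexity of $\Phi$. The two monotonicity claims follow at once from the rewrites $\Psi(s) = \Phi'(\tau)$ (decreasing in $s$) and $s\Psi(s) = 1/\Phi(\tau)$ (increasing in $s$). For integrability of $1/(s\Psi(s)) = \Phi(\tau)$ near $s = 0$, I change variables to $\tau$; computing $|ds/d\tau| = [(\Phi'(\tau))^2 + \Phi(\tau)\Phi''(\tau)]/(\Phi(\tau)\Phi'(\tau))^2$ gives
$$
\int_0^{s_0} \frac{ds}{s\Psi(s)} = \int_{\tau_0}^\infty \left[\frac{1}{\Phi(\tau)} + \frac{\Phi''(\tau)}{(\Phi'(\tau))^2}\right] d\tau,
$$
whose first summand is finite by \eqref{intPhi} and whose second integrates explicitly to a quantity bounded by $1/\Phi'(\tau_0)$.

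For the main inequality $C\|u\|_{L^\Phi_I} \geqslant \int N_I(t)\Psi(N_I(t)) dt$, set $\lambda = \|u\|_{L^\Phi_I}$. The Luxemburg defining condition together with the layer-cake formula reads
$$
1 = \frac{1}{|I|}\int_I \Phi(u/\lambda) dx = \int_0^\infty \Phi'(s) N_I(\lambda s) ds.
$$
Writing $g(s) := N_I(\lambda s)$ (a decreasing function on $[0,\infty)$ with $g(0) = 1$ and $g \leqslant 1$) and rescaling $t = \lambda s$ on the target side, the claim becomes
$$
\int_0^\infty g(s) \Psi(g(s)) ds \leqslant C
$$
under the single constraint $\int \Phi'(s) g(s) ds \leqslant 1$. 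I then split $[0,\infty) = A \cup B$, where $A = \{s : \tau(g(s)) \leqslant s\}$ and $\tau(\sigma)$ is the implicit parameter from $\sigma\Phi(\tau)\Phi'(\tau) = 1$. On $A$ one has $\Psi(g(s)) = \Phi'(\tau(g(s))) \leqslant \Phi'(s)$, so $\int_A g\Psi(g) \leqslant \int g\Phi' \leqslant 1$. On $B$ one has $g(s)\Psi(g(s)) = 1/\Phi(\tau(g(s)))$, and because $g \leqslant 1$ we also have $\tau(g(s)) \geqslant \max(s, \tau_*)$ with $\tau_* := \tau(1)$. Splitting $B$ at $s = \tau_*$, the piece with $s \leqslant \tau_*$ contributes at most $\tau_*/\Phi(\tau_*)$ and the piece with $s > \tau_*$ contributes at most $\int_{\tau_*}^\infty ds/\Phi(s)$, both depending only on $\Phi$ and finite by \eqref{intPhi}. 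The equivalence in the regular case then follows immediately by combining this lower bound with the upper bound built into the definition of regular bump.

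The main obstacle, as I see it, is the behaviour on $B$ near $s = 0$: using only $\tau(g(s)) > s$ would yield the bound $1/\Phi(s)$, which is not integrable at $0$ since $\Phi(0) = 0$. The saving observation is the absolute bound $\tau(g(s)) \geqslant \tau(1)$ coming from $g \leqslant 1$, which keeps the integral on $[0, \tau_*]$ bounded; once this is in place, the remaining bookkeeping is routine.
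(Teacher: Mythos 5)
The paper states this lemma without proof (it is imported wholesale from \cite{NRTV}, as the section heading indicates), so there is no in-paper argument to compare against. Judged on its own merits, your proof is correct and clean. The three qualitative properties of $\Psi$ follow exactly as you say from the rewrites $\Psi(s)=\Phi'(\tau)$ and $s\Psi(s)=1/\Phi(\tau)$ along the decreasing parametrization, and the change-of-variables computation
$$
\int_0^{s_0}\frac{ds}{s\Psi(s)}=\int_{\tau_0}^\infty\left[\frac{1}{\Phi(\tau)}+\frac{\Phi''(\tau)}{(\Phi'(\tau))^2}\right]d\tau
$$
is correct, with the second summand telescoping to a finite quantity and the first finite by \eqref{intPhi}. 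Your reduction of the main inequality to $\int_0^\infty g\,\Psi(g)\,ds\le C$ for a decreasing $g\le 1$ with $\int\Phi' g\le 1$, and the dichotomy $A=\{\tau(g(s))\le s\}$ versus $B=\{\tau(g(s))>s\}$, is exactly the right mechanism: on $A$ you dominate $\Psi(g)$ by $\Phi'(s)$ and land on the constraint, while on $B$ you use $g\Psi(g)=1/\Phi(\tau(g))$ together with the two lower bounds $\tau(g)\ge s$ and $\tau(g)\ge\tau_*$, the latter being the crucial a priori bound near $s=0$ that you correctly single out. The conclusion $C=1+\tau_*/\Phi(\tau_*)+\int_{\tau_*}^\infty ds/\Phi(s)$ depends only on $\Phi$, as required.

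Two small remarks. First, for the ``equivalence in the regular case'' you implicitly and correctly read Definition \ref{regbumps} with the inequality reversed ($\|u\|_{L^\Phi_I}\le C\int N_I\Psi(N_I)$); as printed in the paper the definition repeats the universal inequality of the lemma and would make the asserted equivalence vacuous, so this is surely a sign typo in the source. Second, your integrability computation uses $\Phi''$ and treats $\tau\mapsto 1/(\Phi\Phi')$ as a smooth strictly monotone bijection; the paper only assumes $\Phi$ convex increasing, so strictly one should say ``for $\Phi\in C^2$ with $\Phi\Phi'$ strictly increasing'' (which holds for every bump function actually used), or run the same argument with $\Phi''$ understood as a measure. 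Neither point affects the substance; your argument is a complete and correct proof of the stated lemma.
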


\section{The main results. Boundedness and weak boundedness.}
\label{main}
Given a function $\Phi$, satisfying \eqref{intPhi}, build the corresponding function $\Psi$ as in Section \ref{co}.  We prove the following theorems. Regularity conditions are not very important, but the last condition in the statement of the theorem is actually an important restriction. This is the restriction one would wish to get rid of. Or to prove that it is actually needed. Lately we believe  that one cannot get rid of it.
We give a non-standard definition.

\begin{defin}\label{weaklyconc}
A function $f$ is ``weakly concave'' on its domain, if for any numbers $x_1, \ldots, x_n$ and $\la_1, \ldots, \la_n$, such that $0\leqslant \la_j\leqslant 1$, 
and $\sli \la_j = 1$, the following inequality holds:
$$
f(\sli \la_j x_j)\geqslant C \sli \la_j f(x_j),
$$
where the constant $C$ does not depend on $n$.
\end{defin}

\begin{theorem}
\label{propep}
Suppose there exists a function $\Phi_0$ with corresponding $\Psi_0$, such that:
\begin{itemize}
\item $\Phi_0$  satisfies \eqref{intPhi};
\item $\Phi$ and $\Phi_0$ are regular bumps;
\item There is a function $\ep$, such that $\Psi_0(s)\leqslant C \Psi(s) \ep(\Psi(s))$;
\item The function $t\mapsto t\ep(t)$ is weakly concave, in the sense of the Definition \ref{weaklyconc};
\item The function $t\mapsto  t\ep(t)$ is strictly increasing near $\infty$;
\item The function $t\mapsto  t\ep(t)$ is concave near $\infty$;
\item The function $t\mapsto \frac{\ep(t)}{t}$ is integrable at $\infty$.
\end{itemize}
Suppose that there exists a constant $C$, such that a one-sided bump condition \eqref{1sba2} holds.
Then any \cz operator is bounded  from $L^2(u)$ into $L^{2}(v)$ in the sense of \eqref{mainquestion}. 
\end{theorem}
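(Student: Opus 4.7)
The plan follows the now-classical two-step program for weighted \cz estimates: first reduce to a dyadic model, then construct a Bellman function adapted to the one-sided bump.

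\textbf{Step 1. Reduction to a dyadic model.} I would first invoke Hyt\"onen's representation theorem to realize any \cz operator $T$ as a random average of dyadic shifts whose norms grow only polynomially in the complexity, which reduces the claim to a uniform bound for shifts of fixed complexity. A Nazarov--Treil--Volberg good/bad decomposition applied simultaneously to test functions in $L^2(u)$ and $L^2(v)$, followed by paraproduct removal, then reduces matters to the simple bilinear model form
\[
\Lambda(f,g)=\sum_{I\in\cD}\bigl\langle\Dl_I(fu),\Dl_I(gv)\bigr\rangle,
\]
where $\Dl_I$ are Haar-type differences supported on $I$. The task becomes $|\Lambda(f,g)|\leqslant C\|f\|_{L^2(u)}\|g\|_{L^2(v)}$ assuming only the one-sided bump hypothesis.

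\textbf{Step 2. Infinite-dimensional Bellman setup.} Following the philosophy of \cite{NRTV}, the Bellman candidate $B$ will depend on the scalar averages $\langle fu\rangle_I,\langle gv\rangle_I,\langle f^2u\rangle_I,\langle g^2v\rangle_I,\langle u\rangle_I,\langle v\rangle_I$, and, crucially, on the entire distribution-function profiles of $u$ and of $v$, encoded by the $N_I(t)$ from the Lemma in Section~\ref{co} together with its $v$-analogue. Since $\|u\|_{L^\Phi_I}\sim\int N_I(t)\Psi(N_I(t))\,dt$ for regular bumps, the one-sided bump \eqref{1sba2} cuts out an asymmetric domain in which only one profile variable is constrained against the other's $L^1$ average at a time. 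The target is a concavity-type estimate of the form $B(x)-\tfrac12 B(x_+)-\tfrac12 B(x_-)\gtrsim\Dl f\cdot\Dl g$ that, summed along the dyadic tree and telescoped, yields $|\Lambda(f,g)|\lesssim\|f\|_{L^2(u)}\|g\|_{L^2(v)}$.

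\textbf{Step 3. Where $\ep$ enters, and the main obstacle.} Computing the second-order increment of $B$ in the direction of the dyadic split produces, on the bumped side, an extra integrand of the form $\Psi_0(N_I(t))$ that must be absorbed into $\int N_I(t)\Psi(N_I(t))\,dt$ via the hypothesis $\Psi_0(s)\leqslant C\Psi(s)\ep(\Psi(s))$. The concavity of $t\mapsto t\ep(t)$ near infinity is what manufactures a genuine negative definiteness in the Hessian; the weak concavity of Definition~\ref{weaklyconc} is what survives Jensen's inequality applied across the infinitely many $t$-slices of the profile; and the integrability of $\ep(t)/t$ at infinity is precisely what keeps the infinite-dimensional functional defining $B$ finite. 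The main obstacle, and the reason no proof for arbitrary $\Phi$ satisfying \eqref{intPhi} is known, is that the one-sidedness of the bump forces all of the concavity reserve to come from a single profile: the $v$-side offers no Orlicz slack to compensate the mixed term in the Hessian, so that slack must be extracted entirely from the $\ep$-loss on the $u$-side. The list of conditions on $\ep$ in the statement are the minimal structural requirements making this self-compensation close, and the very difficulty of relaxing them suggests, as the authors note, that the unrestricted one-sided bump conjecture may in fact be false.
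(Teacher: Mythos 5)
Your Step 1 takes a genuinely different route from the paper, and the route you chose runs into a serious obstacle that the paper's choice avoids. The paper does not use Hyt\"onen's representation theorem or any good/bad decomposition at all. Instead, it invokes Lerner's domination theorem (Theorem~\ref{Ler}) to reduce $T$ to a \emph{sparse positive} dyadic operator $T_{\mathcal{D},\{a_I\}}f=\sum_I a_I\av{f}{I}\chi_I$, and then --- crucially --- applies the two-weight testing theorem (Theorem~\ref{Laceyandco}) to this positive operator. The testing step is what eliminates the test functions $f,g$ entirely, reducing everything to the Carleson embedding inequality \eqref{glav}, which involves only the weights, the coefficients $a_I$, and one dyadic cube. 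That is why the paper's Bellman function lives on the small finite set of variables $(u_I,v_I,L_I,A_I)$ together with only the $u$-profile $N_I(t)$. Your proposal instead reduces to the signed bilinear form $\Lambda(f,g)=\sum_I\langle\Dl_I(fu),\Dl_I(gv)\rangle$, for which Sawyer-type testing is not available (the kernel is not positive), and then asks for a Bellman function depending additionally on $\langle fu\rangle_I,\langle gv\rangle_I,\langle f^2u\rangle_I,\langle g^2v\rangle_I$ and on \emph{both} weight profiles. No such function is constructed in the paper, and building one in this two-weight, one-sided-bump setting would essentially require redoing the full NTV two-weight machinery; you would lose the very feature that makes the paper's argument tractable.

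Beyond the reduction, your Step 3 captures the qualitative role of each hypothesis on $\ep$ but leaves out the mechanism. In the paper, the weak concavity of $t\mapsto t\ep(t)$ is used exactly once, in Theorem~\ref{selfimp}, to run a Jensen inequality against the probability measure $d\mu=N(t)\,dt/\int N(t)\,dt$ and thereby prove the self-improving bound $\|u\|_{L^{\Phi_0}_I}\lesssim\|u\|_{L^\Phi_I}\,\ep(\|u\|_{L^\Phi_I}/\av{u}{I})$. The strict monotonicity and concavity of $t\ep(t)$ near infinity are used to show that $\vf(x)=x/\ep(1/x)$ is strictly convex, hence $\vf^{-1}$ is strictly convex, hence $g'(s)>0$ in the Hessian computation for $B_2$; and the integrability of $\ep(t)/t$ at $\infty$ is precisely the convergence of $\int_1^\infty\vf^{-1}(1/x)\,dx$ that makes the explicit formula for $B_2$ finite. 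Your plan gestures at all of this but supplies none of it, and in particular offers no candidate for $B_1$ or $B_2$ --- which is where the paper's actual work lies (Sections~\ref{B2} and~\ref{B1}). As written, the proposal is a plausible-sounding outline for a different and substantially harder proof, not a proof of the theorem.
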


\begin{theorem}
Suppose the function $\Phi$ satisfies all conditions from the theorem above. Suppose that there exists a constant $C$, such that
$$
\|u\|_{L^1(I, \frac{dx}{|I|})} \cdot \|v\|_{L^\Phi_I} \leqslant C.
$$
Then any Calderon-Zygmund operator is weakly bounded  from $L^2(u)$ into $L^{2, \infty}(v)$, i.e. there exists a constant $C$, such that for any function $f\in C_0^\infty$ there holds
\begin{equation}\label{weakbdd}
\|T(fu)\|_{L^{2, \infty}(v)} \leqslant C \|f\|_{L^2(u)}.
\end{equation}
\end{theorem}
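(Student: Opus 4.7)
The plan is to establish the weak-type bound through the familiar Bellman-function recipe for weighted CZ estimates, exploiting the fact that the weak-type norm on the right dualizes against bounded test functions (rather than $L^2(v)$ ones), which is exactly why only the $v$-side bump hypothesis is needed. By the standard equivalence
\[
\|h\|_{L^{2,\infty}(v)} \approx \sup_{E} v(E)^{-1/2} \int_E |h|\,dv,
\]
the desired inequality \eqref{weakbdd} is equivalent to
\[
|\langle T(fu), v\chi_E\rangle| \leq C\,\|f\|_{L^2(u)}\, v(E)^{1/2}
\]
for every measurable $E$ and every $f \in C_0^\infty$. The Hyt\"onen representation then reduces the problem to proving this bilinear inequality for an arbitrary Haar shift of bounded complexity, uniformly in the choice of a dyadic lattice $\cD$.

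On a fixed lattice $\cD$ I would next run a one-sided corona/stopping-time decomposition whose stopping rule reflects the oscillation of $\langle u\rangle_I$ and of the regularised Orlicz average $\int N_I(t)\Psi(N_I(t))\,dt$ applied to $v$ (which by the Regular Bump Lemma of Section~\ref{co} controls $\|v\|_{L^\Phi_I}$). On each corona tree the paraproduct-type diagonal pieces are absorbed by a Carleson embedding that uses precisely the hypothesis $\langle u\rangle_I\|v\|_{L^\Phi_I}\leq C$. What remains is a single-scale model bilinear form in Haar coefficients of $fu$ and of $v\chi_E$ modulated by local averages of $u$ and $v$.

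To handle this model form, following \cite{NRTV}, I would construct a Bellman candidate $B$ whose variables on a cube $I$ are the finite-dimensional data
\[
\langle fu\rangle_I,\quad \langle (fu)^2/u\rangle_I,\quad \langle u\rangle_I,\quad \langle v\rangle_I,\quad v(E\cap I)/|I|,
\]
augmented by the infinite-dimensional datum given by the distribution function of $v$ on $I$. I seek a function $B$ satisfying, for every dyadic cube $I$ with children $I^{\pm}$, the super-martingale inequality
\[
B(I) \geq \tfrac{1}{2}\bigl(B(I^+)+B(I^-)\bigr) + \Delta(I),
\]
where $\Delta(I)$ controls the Haar-shift increment on $I$, together with a pointwise size bound of order $\langle u\rangle\cdot v(E)/|I|$. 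Telescoped over the corona tree this gives the required $\|f\|_{L^2(u)}\,v(E)^{1/2}$. The asymmetry of the hypothesis is reflected in the Bellman function: the $E$-coordinate enters only linearly through $v(E\cap I)/|I|$, so no $\Phi$-bump on the $\chi_E$ side is needed, and the quadratic Orlicz control is asked only of $v$.

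The hard part will be verifying the super-martingale inequality for $B$ against the infinite-dimensional coordinate. It is here that the structural conditions of Theorem~\ref{propep} on $\varepsilon$ --- the weak concavity of $t\mapsto t\varepsilon(t)$ and the integrability of $\varepsilon(t)/t$ at infinity --- must enter, as they furnish the convex-integral comparison between the Orlicz average on the parent cube and the arithmetic mean on the children. The same ``deep obstacle'' mentioned in the abstract persists in the weak-type setting and is the reason we do not expect to reach all $\Phi$ satisfying only \eqref{intPhi}.
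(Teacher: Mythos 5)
Your proposal takes a genuinely different, and substantially longer, route than the paper does. You propose a frontal attack: dualize the $L^{2,\infty}(v)$ norm via Kolmogorov's inequality into the bilinear estimate $|\langle T(fu),\,v\chi_E\rangle|\lesssim \|f\|_{L^2(u)}\,v(E)^{1/2}$, pass to Haar shifts of bounded complexity through the Hyt\"onen representation, run a one-sided corona decomposition, and then build a new Bellman function whose variables track Haar data of $f$, $u$, $v$ and $E$. The paper does none of this. It invokes Lerner's reduction to the \emph{positive} dyadic operator $T_{\mathcal{D},\{a_I\}}f=\sum_I a_I\langle f\rangle_I\chi_I$ (Theorem~\ref{Ler}), not the shift representation, and then uses the two-weight testing characterization (Theorem~\ref{Laceyandco}): for the weak-type conclusion it suffices to have the joint $A_2$ condition together with the single testing inequality $\|\chi_J T_a(v\chi_J)\|^2_{L^2(u)}\le C\,v(J)$. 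That one testing inequality is exactly the Carleson sum estimate \eqref{glav} that was already established for the strong-type Theorem~\ref{propep} via the Bellman pair $B_1$, $B_2$. So the paper's weak-type theorem requires no additional Bellman construction at all; the asymmetry in the hypothesis (only the $v$-side is bumped) is accounted for not by a ``duality against bounded test functions'' heuristic but by the fact that the weak-type testing theorem only demands one of the two Sawyer conditions.

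Two concrete concerns with your plan. First, you never mention the joint $A_2$ hypothesis, yet the paper explicitly flags that the weak-type part of Theorem~\ref{Laceyandco} ``also needs the known fact that the maximal function is weakly bounded if the weights satisfy the joint $A_2$ condition.'' That ingredient (or its analogue) would have to appear somewhere in your corona/Carleson-embedding step, and its absence is a real gap. Second, the corona/stopping-time decomposition you propose is precisely the technique this paper is trying to move away from --- the introduction contrasts the Bellman method here with the stopping-time argument of \cite{CURV}; by re-introducing a corona you would, in effect, be re-deriving the weak-type testing characterization of Hyt\"onen--Lacey et al.\ from scratch rather than citing it, at the cost of a substantially more complicated Bellman function (one that must also track $\langle fu\rangle_I$, $\langle (fu)^2/u\rangle_I$, $v(E\cap I)/|I|$, none of which the paper's $B_1$, $B_2$ depend on). In short, your sketch is plausible as a program but leaves the hard steps unverified and misses the paper's key efficiency: the weak-type theorem is a corollary of the strong-type Bellman work plus the known one-testing-condition-plus-$A_2$ theorem.
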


\section{Examples of $\Phi$ satisfying the restrictions of the main results: the cases from \cite{CURV}}
\label{examples}

The biggest difference of the above results with those of \cite{CURV} is that here we gave the integral condition on the corresponding bump function $\Phi$. 
To compare with \cite {CURV} we notice that in \cite{CURV} theorems above were proved in two cases:
\begin{enumerate}
\item $\Phi(t)=t\log^{1+\sigma}(t)$; 
\item $\Phi(t)=t\log(t)\log\log^{1+\sigma}(t)$, for sufficiently big $\sigma$.
\end{enumerate}
We show that these results are covered by our theorems.

First, suppose $\Phi(t)=t\log^{1+\sigma}(t)$. Then $\Psi(s)\asymp\log^{1+\sigma}(\frac1s)$. We put $\Phi_0(s)=t\log^{1+\frac\sigma 2}(t)$, and then $\ep(t)=t^{-\frac{\sigma}{2(1+\sigma)}}$.
Then, clearly, all properties of $\ep$ from our theorem are satisfied.

\bigskip

Next, suppose $\Phi(t)=t\log(t)\log\log^{1+\sigma}(t)$. Then $\Psi(s)\asymp\log(\frac1s)\log\log^{1+\sigma}(\frac1s)$. We put $\Phi_0(t)=t\log(t)\log\log^{1+\delta\sigma}(t)$, $\delta<1$ which gives $\ep(t)=\log^{-(1-\delta)\sigma}(t)$. 
Then, the integral $\int^\infty \frac{\ep(t)}{t}\,dt$ converges if  $\sigma>1$, and we choose $\delta$ to be very small. 

\bigskip

Moreover, examining the proof of Theorem 5.1 from \cite{CURV}, we get the result from our paper but with a condition 
$$
\mbox{The function $t\mapsto \frac{\sqrt{\ep(t)}}{t}$ is integrable at $\infty$.}
$$
We notice that for regular functions we have $\ep(t)\to 0$ when $t\to \infty$, and so $\ep(t)<\sqrt{\ep(t)}$. Thus, our results work for more function $\ep$ and, thus, bumps $\Phi$.
\section{Preliminary results}
\label{prelim}
In this section we state two helpful results. They are important building blocks in our proof. The first result is due to A. Lerner. It treats the so called {\it Banach function spaces}, see the definition in \cite{Le}. 
\begin{theorem}[Theorem 1.1 from \cite{Ler}]\label{Ler}
Suppose $X$ is a {\it Banach function space} over $\R$ equipped with Lebesgue measure. Then, for any appropriate $f$, 
$$
\|Tf\|_X \leqslant C(T, n) \sup_{\mathcal{D}, \{a_I\}}\|T_{\mathcal{D}, \{a_I\}}|f|\|_X,
$$
where the supremum is taken over all dyadic lattices $\mathcal{D}$ and all sequences $\{a_I\}$, such that $\sli_{I\subset J, I\in \mathcal{D}}a_I|I|\leqslant 2|J|$ for any $J\in \mathcal{D}$. Here
$$
T_{\mathcal{D}, \{a_I\}}f = \sli_I a_I \av{f}{I}\chi_I.
$$
\end{theorem}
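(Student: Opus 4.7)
This is the Lerner ``median decomposition $\Rightarrow$ sparse domination'' principle for a general \cz operator $T$. I would organize the argument around three ingredients: (a) a measure-theoretic local oscillation decomposition for a function on a cube; (b) the Calder\'on--Zygmund smoothness estimate applied to the local oscillation of $Tf$; and (c) a regrouping step producing genuine Carleson sequences in finitely many dyadic lattices. For ingredient (a), given a measurable $g$, a cube $Q_0$, and a parameter $\lambda\in(0,1/2)$, define the $\lambda$-median $m_g(Q_0)$ and the local oscillation $\omega_\lambda(g;Q)=\inf_c((g-c)\chi_Q)^*(\lambda|Q|)$. A standard CZ stopping time on the dyadic subcubes of $Q_0$ (pick the maximal ones on which $|g-m_g(Q_0)|$ exceeds a multiple of $\omega_\lambda(g;Q_0)$, then iterate) yields a $\tfrac12$-sparse (hence Carleson) family $\mathcal{F}\subset\cD(Q_0)$ with
\[
|g(x)-m_g(Q_0)|\leqslant C\sum_{Q\in\mathcal{F}}\omega_\lambda(g;Q)\,\chi_Q(x)\quad\text{a.e. on }Q_0.
\]

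For ingredient (b), set $g=Tf$ and split $f=f\chi_{3Q}+f\chi_{(3Q)^c}$. The H\"ormander smoothness of the kernel gives, for the far part, $|T(f\chi_{(3Q)^c})(x)-T(f\chi_{(3Q)^c})(y)|\leqslant C\sum_{k\geqslant 1}2^{-k\delta}\av{|f|}{2^k Q}$ for all $x,y\in Q$, while the near part is controlled through the weak $(1,1)$ inequality for $T$, which controls the $\lambda$-quantile and therefore contributes $\lesssim \av{|f|}{3Q}$. The combined estimate reads
\[
\omega_\lambda(Tf;Q)\leqslant C(T,n)\sum_{k\geqslant 0}2^{-k\delta}\av{|f|}{2^k Q}.
\]
For ingredient (c), insert this bound into (a). The one-third shift trick furnishes finitely many dyadic lattices $\cD_1,\dots,\cD_N$ such that every dilate $2^kQ$ lies inside some $I=I(Q,k,j)\in\cD_j$ of comparable volume $|I|\leqslant c_n 2^{kn}|Q|$. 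For each $j$, collect the contributions into coefficients $a_I^{(j)}:=C_0^{-1}\sum_{(Q,k)\,:\,I(Q,k,j)=I}2^{-k\delta}|Q|/|I|$; the $\tfrac12$-sparsity of $\mathcal{F}$ combined with the geometric decay $2^{-k\delta}$ forces the Carleson packing $\sum_{I\subset J,\,I\in\cD_j}a_I^{(j)}|I|\leqslant 2|J|$ once $C_0$ is chosen large enough. Letting $Q_0$ exhaust $\R^n$ along a nested sequence of cubes centered at the origin, the medians $m_{Tf}(Q_0)\to 0$ for the class of ``appropriate'' $f$ (those for which $Tf$ tends to $0$ in measure at infinity), so taking the $X$-norm on both sides and using subadditivity of the Banach function norm yields the claimed bound with constant $N\cdot C(T,n)$.

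\emph{Main obstacle.} Step (a) is by now routine. The heart of the matter is Step (b): the smoothness estimate for a \cz operator must be pushed through a \emph{median} rather than a mean, which forces the use of the weak-$(1,1)$ inequality for the near part (the usual $L^1$ bound would blow up). The most delicate bookkeeping lies in Step (c), namely verifying the Carleson condition for the regrouped coefficients $a_I^{(j)}$: the geometric factor $2^{-k\delta}$ must absorb both the packing loss coming from the dilation $Q\mapsto 2^kQ$ and the $\tfrac12$-sparse mass of $\mathcal{F}$, which succeeds only for the correctly chosen normalization constant $C_0$. Once this accounting is done the passage from the pointwise domination to the $X$-norm estimate is a one-liner.
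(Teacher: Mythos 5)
The paper does not actually prove this statement: it is imported verbatim (up to cosmetic rephrasing of ``sparse family'' as ``Carleson sequence'') as Theorem~1.1 of Lerner's arXiv:1202.1860, and the authors rely on it as a black box. So there is no internal proof in this paper to compare against. Your outline reproduces Lerner's own strategy --- local mean oscillation (median) decomposition, the CZ estimate $\omega_\lambda(Tf;Q)\lesssim\sum_{k\ge 0}2^{-k\delta}\av{|f|}{2^kQ}$ obtained by splitting $f$ into near/far parts and using weak-$(1,1)$ for the near part, and then the one-third shifted-grid trick to absorb the dilates $2^kQ$ into finitely many dyadic lattices with geometric decay paying for the Carleson packing. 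Steps (b) and (c) are essentially right, and the regrouping/accounting for the Carleson condition is sound: since $Q\subset I(Q,k,j)\subset J$ forces $Q\subset J$ and a sparse family packs against arbitrary sets, the normalization by $C_0\gtrsim(1-2^{-\delta})^{-1}$ does the job.

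The one genuine imprecision is in your step (a). Lerner's local mean oscillation decomposition (his Theorem~3.1 in that paper, going back to his 2010 BLMS note) is not a pure sparse sum; it reads
$$
|g(x)-m_g(Q_0)|\le 4\,M^{\#,d}_{\lambda;Q_0}g(x) + 4\sum_{Q\in\mathcal{S}}\omega_{\lambda}(g;Q)\,\chi_Q(x),
$$
i.e.\ it carries an extra \emph{local dyadic sharp maximal function} term that your stopping-time sketch silently drops. Getting rid of it is not automatic: you cannot simply add the near-optimal cubes realizing the supremum to $\mathcal{F}$, since those cubes need not be sparse. In the actual argument this term is handled separately, e.g.\ by observing $M^{\#,d}_{\lambda;Q_0}(Tf)\lesssim Mf$ pointwise and then dominating $Mf$ itself by a sparse operator (or, in more modern treatments à la Lerner--Nazarov or Lacey, by replacing the oscillation decomposition entirely with a stopping time run directly on a grand maximal truncation of $T$, which does yield a pure sparse bound in one shot). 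As written, your step (a) asserts a cleaner decomposition than the one the cited method actually produces, so either state the decomposition with the $M^\#$ term and add the extra domination step, or switch to one of the later stopping-time arguments that genuinely avoid it.
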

Here 
$$
\av{f}{I} := \frac{1}{|I|} \ili_I f dx\,.
$$

The next result is the famous testing conditions type theorem. We state it in the way we will use it. First let us introduce the small notation
$$
u(J) := \int_J u\, dx\,.
$$
Notice that 
$$
u(J)= \|1_J\|_{L^2(u)}^2\,.
$$

\begin{theorem}\label{Laceyandco}
Suppose there exists a constant $C$, such that for any dyadic interval $J\in \mathcal{D}$ there holds
\begin{equation}
\label{test}
\begin{aligned}
&\|\chi_J T_{\mathcal{D}, \{a_I\}}(u\chi_J)\|_{L^2(v)}^2 \leqslant C\, u(J),
\\
&\|\chi_J T_{\mathcal{D}, \{a_I\}}(v\chi_J)\|_{L^2(u)}^2 \leqslant C\, v(J)\,.
\end{aligned}
\end{equation}
Then the operator $T_{\mathcal{D}, \{a_I\}}$ is bounded in the sense of \eqref{mainquestion}.

Moreover, if the weights $u$ and $v$ satisfy the joint $A_2$ condition, meaning that for any interval $J$ there holds $\av{u}{J}\av{v}{J}\leqslant C$, and there holds 
$$
\|\chi_J T_{\mathcal{D}, \{a_I\}}(v\chi_J)\|_{L^2(u)}^2 \leqslant C\, v(J)\,,
$$
then the operator $T_{\mathcal{D}, \{a_I\}}$ is weakly bounded in the sense of \eqref{weakbdd}.
\end{theorem}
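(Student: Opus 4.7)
Since $T := T_{\mathcal{D},\{a_I\}}$ is a positive dyadic operator, the plan is to deduce both statements from the two-weight Sawyer testing machinery adapted to the Carleson-sparse coefficient sequence $\{a_I\}$. By duality in $L^2(v)$ and positivity, the bound \eqref{mainquestion} for $T$ is equivalent to the bilinear estimate
\begin{equation*}
B(f,g) \;:=\; \sli_{I \in \mathcal{D}} a_I \av{fu}{I}\av{gv}{I}\,|I| \;\leqslant\; C\,\|f\|_{L^2(u)}\|g\|_{L^2(v)}
\end{equation*}
for nonnegative $f, g$, and I would aim at this bilinear form directly.

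The main device is a pair of parallel corona (stopping-time) decompositions. I would build $\mathcal{F} \subset \mathcal{D}$ as the family of $u$-principal cubes for $f$, formed recursively by selecting maximal dyadic subcubes on which the $u$-weighted average $\av{fu}{\cdot}/\av{u}{\cdot}$ doubles relative to the current principal parent, and an analogous family $\mathcal{G}$ for $g$ with respect to $v$. For each $I \in \mathcal{D}$ let $F(I), G(I)$ be its minimal ancestors in $\mathcal{F}, \mathcal{G}$, and split $B = B_{\subset} + B_{\supset}$ according to whether $F(I) \subseteq G(I)$ or $G(I) \subsetneq F(I)$. In $B_{\subset}$, the $\mathcal{G}$-stopping rule replaces $\av{gv}{I}$ by a multiple of $\av{gv}{G(I)}\av{v}{I}/\av{v}{G(I)}$; regrouping by $G$ reduces matters to the quantity $\sli_{I \in \mathcal{T}(F)} a_I \av{fu}{I}\av{v}{I}|I|$ over $\mathcal{F}$-cubes $F \subseteq G$. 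Using the $\mathcal{F}$-stopping rule on each $\mathcal{T}(F)$, Cauchy--Schwarz, and the first testing condition in \eqref{test} controls this inner expression by $C\sqrt{u(F)v(F)}$. The two remaining summations in $F$ and $G$ are closed by Cauchy--Schwarz together with the standard Carleson embedding for the principal measures $\{u(F)\}_{F\in\mathcal{F}}$ and $\{v(G)\}_{G\in\mathcal{G}}$, which are Carleson precisely because of the doubling stopping rule. The term $B_{\supset}$ is handled symmetrically via the second testing condition. Throughout, the Carleson hypothesis $\sli_{I \subseteq J} a_I |I| \leqslant 2|J|$ on $\{a_I\}$ is what allows the collapsing of each inner sum to a testing integral on the corresponding principal cube.

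For the weak-type assertion I would linearize the Lorentz norm: it suffices to show, for every Borel set $E$ and every $\lambda > 0$ with $E \subseteq \{T(fu) > \lambda\}$, that $\lambda\, v(E)^{1/2} \leqslant C\,\|f\|_{L^2(u)}$, which by duality reduces to bounding $\sli_{I} a_I \av{fu}{I}\, v(E \cap I) \leqslant C\|f\|_{L^2(u)} v(E)^{1/2}$. I would again run the $\mathcal{F}$-corona on the $f$-side, with no corona needed on the $g = \chi_E$ side. The only testing condition available (the one on $T(v\chi_J)$) handles one half; the joint $A_2$ condition $\av{u}{I}\av{v}{I}\leqslant C$ replaces the missing testing hypothesis by furnishing the pointwise substitute $\av{u}{I}\leqslant C/\av{v}{I}$, which is exactly enough to let one absorb the $u$-mass into Carleson sums controlled by $\|f\|_{L^2(u)}$, yielding \eqref{weakbdd}.

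The main obstacle in carrying this out is the off-diagonal bookkeeping when $F(I) \subsetneq G(I)$ or the reverse: only one of the two averages $\av{fu}{I}, \av{gv}{I}$ is then controlled by a stopping rule, and closing the estimate requires a delicate simultaneous use of the Carleson sparseness of $\{a_I\}$ and the matching testing condition to avoid losing a logarithmic factor in the telescoping. In the weak-type argument there is the additional difficulty that $A_2$ is a strictly weaker substitute than a testing inequality, so the argument has to be arranged so that $A_2$ is invoked only on one factor at each scale, with the surviving testing condition absorbing the rest.
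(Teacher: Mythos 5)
The paper itself does not prove Theorem \ref{Laceyandco}: it cites \cite{NTV-mrl} for the strong-type statement (which in that reference is obtained by a Bellman-function two-weight T1 argument for well-localized operators) and assembles the weak-type statement from Theorem~4.3 of \cite{HLMORSUT}, Corollary~3.2 of \cite{HyLa}, and the classical fact that the dyadic maximal function is weakly bounded between the two weighted $L^2$ spaces under joint $A_2$. So any proof you write is by construction ``a different route,'' and for the \emph{strong-type} part your route is a sound, well-known alternative: the parallel corona / principal-cube argument for positive dyadic operators with Carleson coefficients (essentially the Lacey--Sawyer--Uriarte-Tuero scheme). The bilinear reduction, the splitting according to $F(I)\subseteq G(I)$ versus the reverse, the corona substitution $\av{fu}{I}\lesssim \av{fu}{F}\,\av{u}{I}/\av{u}{F}$, the reduction of the inner block to $\sum_{I\subset F}a_I\av{u}{I}v(I)=\int_F\chi_F T_a^{\mathrm{loc}}(u\chi_F)\,v\,dx\le \|\chi_FT_a(u\chi_F)\|_{L^2(v)}v(F)^{1/2}\le C\sqrt{u(F)v(F)}$, and the closing by Carleson embedding over $\mathcal F$ and $\mathcal G$ --- all of this is correct in outline. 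It buys you an elementary, self-contained proof where \cite{NTV-mrl} instead runs a Bellman function argument.

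The weak-type part, however, has a genuine gap. You say: corona on the $f$-side, no corona on $\chi_E$, then ``$A_2$ furnishes the pointwise substitute $\av{u}{I}\le C/\av{v}{I}$, which is exactly enough to absorb the $u$-mass into Carleson sums.'' Chasing this: inside a principal cube $F$ the block sum is $\sum_{I\subset F}a_I\av{u}{I}v(E\cap I)$, and the $A_2$ substitution gives $\av{u}{I}\,v(E\cap I)\le C\,|I|\,\dfrac{v(E\cap I)}{v(I)}\le C|I|$, so the Carleson property yields only $\sum_{I\subset F}a_I\av{u}{I}v(E\cap I)\lesssim |F|$, which is the wrong quantity --- it carries no $v(E)$ and no $u(F)$, and the outer sum over $\mathcal F$ does not close against $\|f\|_{L^2(u)}v(E)^{1/2}$. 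The missing testing condition cannot be replaced by $A_2$ at the level of a pointwise substitution inside one stopping block. The correct route --- and the one the paper's citations are implicitly invoking --- is to dualize the weak-type bound to the \emph{global} testing estimate
\begin{equation*}
\|T_a(v\chi_E)\|_{L^2(u)}\;\lesssim\;v(E)^{1/2}\qquad\text{for all measurable }E,
\end{equation*}
then split $T_a(v\chi_E)$ into a ``local'' part (sums of $I$ contained in a Whitney/corona family built from $E$, controlled by the cube-testing hypothesis $\|\chi_JT_a(v\chi_J)\|_{L^2(u)}^2\le Cv(J)$) and a ``tail'' part dominated pointwise by the maximal function of $v\chi_E$, which is where the $A_2$-implied weak boundedness of $M$ enters. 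Your sketch never introduces the maximal function or this local-to-global testing passage, and as written the estimate does not close. This is precisely the extra ingredient flagged in the paper's sentence about the maximal function being weakly bounded under joint $A_2$.
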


The first part can be found in \cite{NTV-mrl}. The last statement follows from the Theorem 4.3 of \cite{HLMORSUT} and Corollary 3.2 of \cite{HyLa}. It also needs the known fact that the maximal function is weakly bounded if weights satisfy the joint $A_2$ condition.

\subsection{Self improvements of Orlicz norms.}
In this section we prove a technical result, which has the following ``hand-waving'' explanation: suppose we take a function $\Phi$ and a smaller function $\Phi_0$. We explain how small can be the quotient $\frac{\|u\|_{L^{\Phi_0}_I}}{\|u\|_{L^{\Phi}_I}}$ in terms of smallness of $\frac{\Phi_0}{\Phi}$. In what follows we consider only ``regular bumps'' functions in the sense of the Definition \ref{regbumps}.

Suppose we have two functions $\Phi$ and $\Phi_0$, and we have built functions $\Psi$ and $\Psi_0$. We suppose that 
$$
\Psi_0(s)\leqslant C \Psi(s) \ep(\Psi(s)).
$$

The following theorem holds.
\begin{theorem}
\label{selfimp}
Let $I$ be an arbitrary interval (cube). If a function $t\mapsto t\ep(t)$ is weakly concave, then 
$$
\|u\|_{L_I^{\Phi_0}}\leqslant C \|u\|_{L_I^{\Phi}} \;\ep\Big(\frac{\|u\|_{L_I^\Phi}}{\av{u}{I}}\Big)\,.
$$
\end{theorem}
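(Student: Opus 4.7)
The plan is to pass through the integral representation of Orlicz norms available for regular bumps (the Lemma of Section~\ref{co}), transport the pointwise comparison $\Psi_0\leq C\,\Psi\cdot(\ep\circ\Psi)$ into an integral inequality, and conclude with a continuous Jensen-type inequality coming from weak concavity of $F(s):=s\ep(s)$.

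First I would write, using that $\Phi$ and $\Phi_0$ are both regular bumps,
$$
\|u\|_{L^\Phi_I}\sim\int_0^\infty N_I(t)\,\Psi(N_I(t))\,dt,\qquad \|u\|_{L^{\Phi_0}_I}\sim\int_0^\infty N_I(t)\,\Psi_0(N_I(t))\,dt,
$$
and insert the pointwise hypothesis $\Psi_0(s)\leq C\Psi(s)\ep(\Psi(s))$ to get
$$
\|u\|_{L^{\Phi_0}_I}\leq C\int_0^\infty N_I(t)\,\Psi(N_I(t))\,\ep\!\bigl(\Psi(N_I(t))\bigr)\,dt.
$$

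Second, I would recast the right-hand side as an integral against the measure $d\mu(t):=N_I(t)\,dt$ on $(0,\infty)$. By the layer-cake identity $\mu((0,\infty))=\av{u}{I}$. Setting $g(t):=\Psi(N_I(t))$ and $F(s):=s\ep(s)$, the previous bound becomes $\|u\|_{L^{\Phi_0}_I}\leq C\int F(g)\,d\mu$, while $\int g\,d\mu\sim\|u\|_{L^\Phi_I}$. Applying the weak concavity of $F$ from Definition~\ref{weaklyconc} to Riemann-sum approximations on partitions of $(0,\infty)$ where $g$ is nearly constant, and passing to the limit (valid because the constant there is uniform in $n$), yields the continuous Jensen-type inequality
$$
\int F(g)\,d\mu\leq C\,\mu((0,\infty))\,F\!\Bigl(\tfrac{1}{\mu((0,\infty))}\int g\,d\mu\Bigr).
$$
Substituting the identifications above gives
$$
\|u\|_{L^{\Phi_0}_I}\leq C\,\av{u}{I}\,F\!\Bigl(\tfrac{\|u\|_{L^\Phi_I}}{\av{u}{I}}\Bigr)=C\,\|u\|_{L^\Phi_I}\,\ep\!\Bigl(\tfrac{\|u\|_{L^\Phi_I}}{\av{u}{I}}\Bigr),
$$
which is the stated estimate.

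The main point requiring care is the upgrade from the finite weak-concavity axiom to its integral Jensen form. This is manageable because $N_I$ is non-increasing and $\Psi$ is decreasing (Lemma of Section~\ref{co}), so $g=\Psi\circ N_I$ is monotone and is readily approximated by simple functions on level sets; the key feature is that the constant in Definition~\ref{weaklyconc} does not depend on the number of terms, so the discrete inequality survives the limit. If any obstacle arises, it would be here: one must ensure that $F$ is well-behaved enough (e.g.\ upper semicontinuity or the eventual monotonicity and concavity of $t\ep(t)$ provided in Theorem~\ref{propep}) for the passage to the limit to be legitimate in the range of values actually taken by $g$.
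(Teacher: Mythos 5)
Your proposal follows the paper's argument essentially verbatim: both pass through the regular-bump integral representation $\|u\|_{L^\Phi_I}\sim\int N_I\Psi(N_I)\,dt$, transfer the pointwise bound $\Psi_0\leq C\Psi\,\ep(\Psi)$, and then apply a Jensen inequality with constant (stated in the paper as a short lemma about weakly concave functions) with respect to the probability measure $N_I(t)\,dt/\int N_I$ and the test function $\Psi\circ N_I$. The only cosmetic difference is that you work with the unnormalized measure $N_I(t)\,dt$ and divide by its total mass $\av{u}{I}$ afterward; the substance and the order of steps are the same.
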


To do that we need the following easy lemma:
\begin{lemma}
For weakly concave functions the Jensen inequality holds with a constant:
$$
\ili f(g(t))d\mu(t) \leqslant C f(\ili g(t)d\mu(t)).
$$
\end{lemma}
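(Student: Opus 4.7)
The natural plan is to reduce the weakly concave hypothesis to an actually concave object, and then apply the ordinary (probability-measure) Jensen inequality to it. I would proceed as follows.

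First, I would introduce the least concave majorant $\widetilde{f}$ of $f$ on the relevant domain, namely
\[
\widetilde{f}(x) := \sup\Bigl\{\sum_{j=1}^{n}\lambda_j f(x_j):\ n\in\mathbb{N},\ \lambda_j\in[0,1],\ \sum_{j}\lambda_j=1,\ \sum_{j}\lambda_j x_j=x\Bigr\}.
\]
By construction $\widetilde{f}$ is concave and satisfies $\widetilde{f}\geqslant f$. On the other hand, the weakly concave property applied to any admissible convex combination with $\sum_j\lambda_j x_j=x$ gives $f(x)\geqslant C\sum_j \lambda_j f(x_j)$, and taking the supremum over all such combinations yields $f(x)\geqslant C\,\widetilde{f}(x)$. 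Thus $f\leqslant\widetilde{f}\leqslant C^{-1}f$ pointwise.

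Next, without loss of generality I normalize $\mu$ to a probability measure (otherwise rescale, or note that the statement is homogeneous in $\mu$). Since $\widetilde{f}$ is a genuine concave function, the classical Jensen inequality gives
\[
\int \widetilde{f}(g(t))\,d\mu(t)\leqslant \widetilde{f}\Bigl(\int g(t)\,d\mu(t)\Bigr).
\]
Chaining with the two-sided bound $f\leqslant \widetilde{f}\leqslant C^{-1}f$ then produces
\[
\int f(g(t))\,d\mu(t)\leqslant \int \widetilde{f}(g(t))\,d\mu(t)\leqslant \widetilde{f}\Bigl(\int g(t)\,d\mu(t)\Bigr)\leqslant C^{-1}\,f\Bigl(\int g(t)\,d\mu(t)\Bigr),
\]
which is the required inequality (with the lemma's constant being $C^{-1}$ in the notation of Definition \ref{weaklyconc}).

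The only non-trivial point, and the one I would expect to be the main issue, is the measurability/integrability of $\widetilde{f}\circ g$ so that the standard Jensen step is legitimate. Since the functions $f$ appearing in the application (the relevant $\ep$'s, concave near infinity and increasing) are sufficiently regular, $\widetilde{f}$ is continuous and $\widetilde{f}\circ g$ is dominated by $C^{-1}f\circ g$, so measurability and integrability are automatic. If one wanted a hands-on argument avoiding envelopes, the same conclusion can be reached by discretizing: approximate $g$ by simple functions $g_n=\sum_j x_j^{(n)}\chi_{E_j^{(n)}}$ with $\lambda_j^{(n)}=\mu(E_j^{(n)})$, apply the finite weakly concave inequality to the resulting convex combination, and pass to the limit via dominated/monotone convergence, using continuity of $f$ near the averages. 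The envelope approach is cleaner and avoids the need to track error terms in the limit.
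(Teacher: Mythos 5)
Your proof is correct, but it takes a genuinely different route from the paper's. The paper argues directly by discretization: for a step function $g=\sum_j x_j\chi_{E_j}$ with $\mu$ a probability measure, the quantities $\int f(g)\,d\mu=\sum_j\mu(E_j)f(x_j)$ and $\int g\,d\mu=\sum_j\mu(E_j)x_j$ are exactly the two sides of the weak-concavity inequality with $\lambda_j=\mu(E_j)$, and then one passes to the limit for general $g$, the key point (which the paper stresses) being that the constant in Definition \ref{weaklyconc} is uniform in $n$. You instead introduce the least concave majorant $\widetilde{f}$ and observe that the weak-concavity hypothesis is \emph{equivalent} to the pointwise two-sided comparison $f\leqslant\widetilde{f}\leqslant C^{-1}f$, after which ordinary Jensen for the genuinely concave $\widetilde{f}$ does the rest. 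This reformulation is a nice conceptual gain: it shows that ``weakly concave'' means precisely ``comparable to a concave function,'' and it avoids tracking a limiting process (at the modest cost of a measurability remark for $\widetilde{f}\circ g$, which you correctly dispose of via continuity of the concave envelope on the interior of a convex domain). You also note the discretization argument as a fallback, which is in fact what the paper does. The only bookkeeping caveat, which you flag, is that the constant in the lemma is the reciprocal of the constant in Definition \ref{weaklyconc}.
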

\begin{proof}
This is true since if $g$ is a step function, then this is just a definition. Then we pass to the limit. 
Here we essentially used that we can take a convex combination of $n$ points, and the constant in the definition above does not depend on $n$.
\end{proof}

\begin{proof}[Proof of the Theorem]
In the proof we omit the index $I$.
Since for regular bumps we know that 
$$
\|u\|_{L^\Phi} \sim \int \Psi(N(t))N(t)dt, 
$$
we simply need to prove that
$$
\int \Psi_0(N(t))N(t)dt \leqslant C \int \Psi(N(t))N(t)dt\; \ep\Big(\frac{\int \Psi(N(t))N(t)dt}{\int N(t)dt}\Big)
$$

Our first step is the obvious estimate of the left-hand side:
$$
\int \Psi_0(N(t))N(t)dt\leqslant C \int \Psi(N(t))\ep(\Psi(N(t))N(t)dt\,.
$$

Denote $a(t)=t\ep(t)$. Then we need to prove that
$$
\int a(\Psi(N(t)) N(t)dt \leqslant C \int N(t)dt \; a\Big(\frac{\int \Psi(N(t))N(t)dt}{\int N(t)dt}\Big)\,.
$$
We denote 
$$
d\mu = \frac{N(t)}{\int N(t)dt}dt,
$$
it is a probability measure. Moreover, by assumption, $t\mapsto a(t)$ is concave. Therefore, by Jensen's inequality (from the Lemma),
$$
\int a(f(t))d\mu(t) \leqslant C a\Big(\int f(t)d\mu(t)\Big)\,.
$$
Take $f(t)=\Psi(N(t))$, and the result follows.
\end{proof}
\subsection{Examples}
\subsubsection{$\log$-bumps}
First, if $\Phi(t)=t\log^{1+\sigma}(t)$, then $\Psi(s)=\log^{1+\sigma}(1/s)$, and 
$$
\frac{\Psi_0(s)}{\Psi(s)}=\log^{-\frac{\sigma}{2}}(1/s) = \Psi^{-\frac{\sigma}{2(1+\sigma)}}.
$$
Thus, $\ep(t)=t^{-\frac{\sigma}{2(1+\sigma)}}$, and everything is fine.

\bigskip
\subsubsection{$\log\log$-bumps}\label{exloglog}
Next example is with double logs. In fact, when
$$
\Psi(s)=log(1/s)(\log\log(1/s))^{1+\sigma}, \;\;\;\; 
\Psi_0(s)=log(1/s)(\log\log(1/s))^{1+\sigma/2}
$$
then
$$
\frac{\Psi_0(s)}{\Psi(s)}=\log\log^{-\sigma/2}(1/s) \sim (\log(\Psi(s)))^{-\sigma/2}.
$$
Thus, $\ep(t)=(\log t)^{-\frac{\sigma}2}$. Everything would be also fine, except for one little thing: the function $t\mapsto t\ep(t)$ is concave on infinity, but not near $1$.  However, $t\mapsto t\ep(t)$ is weakly concave on $[2, \infty)$, and this is enough for our goals as without loss of generality, $\Psi(s)\geqslant 2$. 

 So let us prove that $a(t)= t\ep(t)$ is weakly concave on $[2, \infty)$.

Let $\varkappa:= \frac{\sigma}2$. The function $a$ has a local minimum at $e^{\varkappa}$ and its concavity changes at $e^{\varkappa+1}$. We now take $x_j$, $\la_j$ and $x=\sli \la_j x_j$. We first notice that if $x>e^{\varkappa+1}$, the we are done, because then $(x,\sli \la_j a(x_j))$ lies under the graph of $a$. 

If $2\le x<e^{\varkappa+1}$, then $a(x)>\min_{[2, e^{\varkappa+1}]}a = c(\varkappa)$. Moreover, if $\ell$ is a line tangent to graph of $a$, starting at $(2, a(2))$, and $\ell$ ``kisses'' the graph at a point $(r, a(r))$, then $\sli \la_j a(x_j)\leqslant a(r)=c_1(\varkappa)$. This follows from the picture: a convex combination of $a(x_j)$ can not be higher than this line.

Therefore, 
$$
a(\sli \la_j x_j)\geqslant c(\varkappa)\geqslant C c_1(\varkappa) \geqslant \sli \la_j a(x_j)\,.
$$
This finishes our proof.

\subsection{Proof of the main result: notation and the first reduction.}
We fix a dyadic grid $\mathcal{D}$. Theorems \ref{Ler} and \ref{Laceyandco} show that to prove our main results it is enough to show that the following implication holds:
$$
\mbox{if for all}\, \,\,I \, \; \|u\|_{L^\Phi_I} \cdot \|v\|_{L^1(I, \frac{dx}{|I|})} \leqslant B_{u,v} \; \mbox{then} \; \|\chi_J T_{\mathcal{D}, \{a_I\}}(u\chi_J)\|_{L^2(v)}^2\leqslant C\, u(J) ,
$$
where $C$ does not depend neither on the grid, nor on the sequense $\{a_I\}$. It can, of course, depend on $B_{u,v}$. This will prove the weak bound $T: L^2(v)\rightarrow L^{2,\infty}(u)$.
For simplicity, we denote $T_a = T_{\mathcal{D}, \{a_I\}}$. It is an easy calculation that, under the joint $A_2$ condition (which is definitely satisfied under the bump condition), it is enough to get an estimate of the following form:
\begin{equation}
\label{glav}
\frac{1}{|J|}\sli_{J\subset I} a_I \cdot \av{u}{I} \cdot \frac{1}{|I|}\sli_{K\subset I}a_K \av{u}{K}\av{v}{K}|K| \cdot |I|\leqslant C\, u(J)\,.
\end{equation}

\begin{zamech}
By the rescaling argument it is clear that we can assume $B_{u,v}$ as small as we need (where ``smallness'', of course, depends only on the function $\Phi$). We need this remark, since all behaviors of our function $\ep$ are studied near $0$. 
\end{zamech}

\begin{zamech}
Everything is reduced to \eqref{glav}. We concentrate on proving \eqref{glav}. Clearly, by scale invariance, it looks very tempting to make \eqref{glav} a Bellman function statement. This will be exactly our plan from now on.
\end{zamech}

\section{Bellman proof of \eqref{glav}: introducing the ``main inequality''}
\label{mainineq}
%
We start this Section with the following notation. We fix two weights $u$ and $v$, and a Carleson sequense $\{a_I\}$. We denote
$$
u_I = \av{u}{I}, \; \; v_I=\av{v}{I}; \; \; \; N_I(t)=\frac1{|I|} |\{x\colon u(x)\geqslant t\}|;
$$
$$
A_I = \frac{1}{|I|}\sli_{J\subset I}a_J |J|; 
$$
$$
L_I = \frac{1}{|I|}\sli_{J\subset I}a_J \av{u}{J}\av{v}{J} |J|.
$$

We proceed with two theorems that prove our main result. Everywhere in the future we use that   $\av{u}{I}\av{v}{I}=u_I v_I \leqslant \delta < 1$ for any $I$. We can do it due to simple rescaling.

\begin{theorem}
\label{mainineq}
Suppose that 
$$
\frac{\Psi_0}{\Psi}\leqslant \ep (\Psi),
$$
where $\ep$ satisfies properties of Theorem \ref{main}, from which the main one is
\begin{equation}
\label{intep}
\int^\infty \frac{\ep(t)}{t}\, dt<\infty\,.
\end{equation}
Let $\delta$ be small enough, and
$$
\Om_1 = \{(N, A) \colon 0\leqslant N \leqslant 1; \; 0\leqslant A \leqslant 1\} 
$$
and for some constant $P$
$$
\Om_2 = \{(u,v,L,A)\colon 0\leqslant A \leqslant 1; \; u,v,L \geqslant 0; \; uv\leqslant \delta; \; L\leqslant P\cdot \sqrt{uv}\}.
$$
Suppose we have found a function $B_1$, defined on $\Om_1$, and a function $B_2$, defined on $\Om_2$, such that:
\begin{align}
& 0\leqslant B_1\leqslant N;\\
& (B_1)'_A \geqslant 10\frac{N}{\Psi_0(N)};\\
& -d^2 \,B_1 \geqslant 0;\\
& 0\leqslant B_2\leqslant u;\\
& (B_2)'_A \geqslant 0\\ 
&(B_2)'_A\geqslant c\cdot u\cdot L, \; \mbox{when} \; P\cdot \sqrt{uv}\ge L\geqslant \frac{uv}{\ep(\frac{1}{uv})}; \\
&uv(B_2)'_L \geqslant -\delta_1 uL, \; \mbox{for sufficiently small $\delta_1$ in the whole of $\Omega_2$}; \\
& -d^2\,B_2 \geqslant 0.
\end{align}

Then for the function of an interval $\mathcal{B}(I):= B_2(u_I, v_I, L_I, A_I)+\ili_0^\infty B_1(N_I(t),  A_I)dt$ the following holds:
\begin{align}
& 0\leqslant \mathcal{B}(I) \leqslant 2u_I\\
&\mathcal{B}(I)-\frac{\mathcal{B}(I_+)+\mathcal{B}(I_-)}{2} \geqslant C\, a_I\cdot u_I\cdot L_I. \label{diff}
\end{align}
\end{theorem}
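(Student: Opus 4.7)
My plan is to split the proof into the almost routine size bound and the delicate drop inequality, treating the latter with a two-step concavity-plus-Taylor argument on each summand of $\mathcal B(I)$, followed by a case split on the size of $L_I$.

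For the size estimate $0\le\mathcal B(I)\le 2u_I$, nonnegativity is immediate from $B_1,B_2\ge 0$, while the upper bound follows from $B_2\le u$ evaluated at $u_I$, the layer-cake identity $\int_0^\infty N_I(t)\,dt=u_I$, and the pointwise bound $B_1\le N$.

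For the drop inequality I would first record the update rules of the state variables: $u_I=\tfrac12(u_{I_+}+u_{I_-})$, $v_I=\tfrac12(v_{I_+}+v_{I_-})$, $N_I(t)=\tfrac12(N_{I_+}(t)+N_{I_-}(t))$, together with the ``Carleson jumps'' $A_I-\tfrac12(A_{I_+}+A_{I_-})=a_I$ and $L_I-\tfrac12(L_{I_+}+L_{I_-})=a_Iu_Iv_I$. Writing $\bar X$ for the midpoint of the two children's state vectors, the concavity hypothesis $-d^2B_2\ge 0$ absorbs the children-to-midpoint step $\tfrac12(B_2(X_{I_+})+B_2(X_{I_-}))\le B_2(\bar X)$, while the subgradient inequality at $X_I$ applied along the displacement $X_I-\bar X=(0,0,a_Iu_Iv_I,a_I)$ yields
\[
B_2(X_I)-\tfrac12\bigl(B_2(X_{I_+})+B_2(X_{I_-})\bigr)\ge a_I(B_2)'_A(X_I)+a_Iu_Iv_I(B_2)'_L(X_I).
\]
The same two-step procedure inside the $dt$-integral (where only $A$ is displaced at fixed $t$), combined with $(B_1)'_A\ge 10N/\Psi_0(N)$, produces the additional contribution $10a_I\int_0^\infty N_I(t)/\Psi_0(N_I(t))\,dt$.

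The last step is a case analysis on $L_I$. When $L_I\ge u_Iv_I/\ep(1/(u_Iv_I))$ the assumptions $(B_2)'_A\ge c\,u_IL_I$ and $u_Iv_I(B_2)'_L\ge-\delta_1 u_IL_I$ already give $(c-\delta_1)a_Iu_IL_I$, which is positive provided $\delta_1$ is small. The harder range is $L_I<u_Iv_I/\ep(1/(u_Iv_I))$: here $(B_2)'_A$ contributes only nonnegatively, so the required $Ca_Iu_IL_I$ must come entirely from the $B_1$ integral. I would apply Cauchy--Schwarz in the form $u_I=\int N_I\,dt\le\bigl(\int N_I\Psi_0(N_I)\,dt\bigr)^{1/2}\bigl(\int N_I/\Psi_0(N_I)\,dt\bigr)^{1/2}$, then use the regular-bump equivalence $\|u\|_{L^{\Phi_0}_I}\asymp\int N_I\Psi_0(N_I)\,dt$ to get $\int N_I/\Psi_0(N_I)\,dt\ge u_I^2/\|u\|_{L^{\Phi_0}_I}$. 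The self-improvement Theorem \ref{selfimp} combined with the one-sided bump condition $\|u\|_{L^\Phi_I}v_I\le B$ then bounds $\|u\|_{L^{\Phi_0}_I}$ by $CB\,\ep(B/(u_Iv_I))/v_I$, and a regularity-of-$\ep$ argument (comparing $\ep(B/(u_Iv_I))$ with $\ep(1/(u_Iv_I))$ up to a constant) delivers the desired $\int N_I/\Psi_0(N_I)\,dt\gtrsim u_IL_I$. The main obstacle is precisely this last chain: one has to choose the rescaling constant $B$, the Carleson parameter $P$ and $\delta_1$ so that the contribution from $(B_2)'_L$ is strictly dominated and a genuine absolute constant $C>0$ survives in the drop inequality.
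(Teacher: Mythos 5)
Your plan follows essentially the same route as the paper's proof: the same concavity-plus-Taylor expansion (the paper does it in a single averaged Taylor step at the parent's state vector, which is equivalent to your midpoint-then-displace two-step), the identical case split on whether $L_I$ is above or below $u_Iv_I/\ep(1/(u_Iv_I))$, the Cauchy--Schwarz step $u_I^2\le(\int N_I\Psi_0)\cdot(\int N_I/\Psi_0)$, and the self-improvement Theorem~\ref{selfimp} together with the one-sided bump. The only cosmetic deviation is in the final comparison: the paper works with the monotonicity of $\vf(x)=x/\ep(1/x)$ near $0$, whereas you phrase it as a comparison of $\ep(B/(u_Iv_I))$ with $\ep(1/(u_Iv_I))$; these are the same observation (monotonicity of $t\mapsto t\ep(t)$ at infinity), and your sketch closes correctly once one notices that the quantity to control is the product $\|u\|_{L^\Phi_I}\ep(\|u\|_{L^\Phi_I}/u_I)$ rather than $\ep$ alone, which is where the monotonicity of $t\ep(t)$ (not of $\ep$) is used.
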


Next, we state
\begin{theorem}
\label{thenglav}
If such two functions $B_1$ and $B_2$ exist, then \eqref{glav} holds, namely
$$
\frac{1}{|I|}\sli_{J\subset I} a_I \cdot \av{u}{J} \cdot \frac{1}{|J|}\sli_{K\subset J}a_K \av{u}{K}\av{v}{K}|K| \cdot |I|\leqslant R^2 \ili_I u.
$$
\end{theorem}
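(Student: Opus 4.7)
The claim is a direct telescoping consequence of the main inequality \eqref{diff} together with the two-sided bound $0\le \mathcal{B}\le 2u$. First, since for a dyadic $I$ we have $|I_+|=|I_-|=|I|/2$, multiplying \eqref{diff} by $|I|$ rewrites it as the one-step dissipation inequality
$$
\mathcal{B}(I)|I| - \mathcal{B}(I_+)|I_+| - \mathcal{B}(I_-)|I_-| \;\geq\; C\, a_I\, u_I\, L_I\, |I|.
$$

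Fix $J$ and iterate this downward through all dyadic descendants of $J$ up to some finite depth $n$. The left-hand side telescopes to $\mathcal{B}(J)|J| - \sli_{I} \mathcal{B}(I)|I|$, where the remaining sum runs over $I\subset J$ at depth exactly $n$. By the non-negativity $\mathcal{B}\ge 0$ this telescoped expression is bounded above by $\mathcal{B}(J)|J|$, and by the upper bound $\mathcal{B}(J)\le 2u_J$ it is in fact at most $2u_J|J|=2u(J)$. The right-hand side accumulates to $C\sli a_I u_I L_I |I|$ over all $I\subset J$ of depth $\le n$. Sending $n\to\infty$ by monotone convergence of non-negative summands gives
$$
\sli_{I\subset J} a_I u_I L_I |I| \;\le\; \frac{2}{C}\,u(J).
$$

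Substituting the definition $L_I|I| = \sli_{K\subset I} a_K \av{u}{K}\av{v}{K}|K|$ on the left rewrites this as
$$
\sli_{I\subset J} a_I \av{u}{I} \sli_{K\subset I} a_K \av{u}{K}\av{v}{K}|K| \;\le\; \frac{2}{C}\,u(J),
$$
which is precisely \eqref{glav} with $R^2 = 2/C$. There is no real obstacle here: the whole proof is forced by \eqref{diff} plus $0\le \mathcal{B}\le 2u$, and the only technical point is convergence of the telescoping, which is handled either by monotone convergence or by first assuming $\{a_I\}$ is finitely supported and then passing to a limit using the uniformity of the estimate. All the actual difficulty of the paper is concentrated in Theorem \ref{mainineq}, i.e.\ in the construction of the infinite-dimensional Bellman function $\mathcal{B}$, built from $B_1$ and $B_2$, that satisfies the required concavity and partial-derivative lower bounds.
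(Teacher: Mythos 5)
Your proof is correct and is essentially the paper's own argument: multiply \eqref{diff} by $|I|$, telescope over the dyadic tree to depth $n$, bound the boundary term using $0\le\mathcal{B}\le 2u$, and let $n\to\infty$. The paper phrases it as a ``Green's formula on the tree,'' but the steps — including the handling of the limit and the substitution of the definition of $L_I$ — are identical to yours.
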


\begin{proof}[Proof of the Theorem \ref{thenglav}]
This is a standard Green's formula applied to function $\mathcal{B}(I)$ on the tree of dyadic intervals. Let us explain the details.

Since the function $\mathcal{B}$ is non-negative, we have that
$$
2|I| u_I \geqslant |I|\mathcal{B}(I) \geqslant |I|\mathcal{B}(I) - \sli_{k=1}^{2^n} |I_{n,k}| \mathcal{B}(I_{n,k}).
$$
Here $n$ is fixed, and $I_{n,k}$ are $n-th$ generation descendants of $I$. Clearly, all $|I_{n,k}|$ are equal to $2^{-n}$. 

Let us denote $\Delta(J) = |J|\mathcal{B}(J) - |J_+|\mathcal{B}(J_+) - |J_-|\mathcal{B}(J_-)$, where $J_\pm$ are children of $J$. By the property \eqref{diff} we know that $\Delta(J)\geqslant C|J|\, a_J u_J L_J$.
By the telescopic cancellation, we get that
$$
|I|\mathcal{B}(I) - \sli_{k=1}^{2^n} |I_{n,k}| \mathcal{B}(I_{n,k}) = 
\sli_{m=0}^{n-1} \sli_{k=1}^{2^m}\Delta(I_{m,k}).
$$
Combining our estimates, we get
$$
2|I| u_I \geqslant C \sli_{m=0}^{n-1} \sli_{k=1}^{2^m} |I_{m,k}| a_{I_{m,k}}u_{I_{m,k}}L_{I_{m,k}} = C \sli_{J\subset I, |J|\geqslant 2^{-n}|I|}|J|a_J u_J L_J.
$$
This is true for every $n$, with the constant $C$ independent of $n$. Thus, 
$$
u_I\geqslant C \frac{1}{|I|}\sli_{J\subset I} a_J u_J L_J |J|.
$$ 
The result follows from the definition of $L_J$.
\end{proof}

In the future we use the following variant of Sylvester criterion of positivity of matrix.

\begin{lemma}
\label{Sy}
Let $M=(m_{ij})_{i, j=1}^3$ be a $3\times 3$ real symmetric matrix such that $m_{11}<0$, $m_{11} m_{22}-m_{12}m_{21} >0$, and $\det \,M=0$. Then $M$ is nonpositive definite.
\end{lemma}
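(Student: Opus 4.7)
The plan is to combine a $2\times 2$ computation with Cauchy's interlacing theorem. First I would observe that the hypothesis $m_{11}m_{22}-m_{12}m_{21}>0$ together with $m_{11}<0$ forces $m_{22}<0$: since $M$ is symmetric, $m_{12}m_{21}=m_{12}^{2}\geq 0$, so $m_{11}m_{22}>0$ and the two diagonal entries have the same sign. Hence the upper-left $2\times 2$ principal submatrix
\[
M_0=\begin{pmatrix} m_{11} & m_{12}\\ m_{21} & m_{22}\end{pmatrix}
\]
has negative diagonal entries and positive determinant, so by the standard two-dimensional Sylvester criterion it is negative definite; in particular its two eigenvalues $\beta_1\geq \beta_2$ are both strictly negative.

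Next I would apply Cauchy's interlacing theorem to the symmetric matrix $M$, with eigenvalues $\alpha_1\geq \alpha_2\geq \alpha_3$, and its principal submatrix $M_0$: this yields the chain
\[
\alpha_1\;\geq\; \beta_1\;\geq\; \alpha_2\;\geq\; \beta_2\;\geq\; \alpha_3.
\]
The strict negativity of $\beta_1,\beta_2$ then immediately gives $\alpha_2\leq \beta_1<0$ and $\alpha_3\leq \beta_2<0$, so two of the three eigenvalues of $M$ are already strictly negative.

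The final step uses the remaining hypothesis $\det M=0$, which says $\alpha_1\alpha_2\alpha_3=0$, i.e.\ at least one eigenvalue of $M$ vanishes. Since $\alpha_2,\alpha_3<0$, this forces $\alpha_1=0$. Hence all eigenvalues of $M$ are $\leq 0$, which is exactly the statement that $M$ is nonpositive definite.

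There is no genuine obstacle here --- the argument is pure linear algebra --- and the only mild subtlety is that Cauchy's interlacing theorem is invoked in the familiar form for the principal submatrix obtained by deleting the last row and column. If a self-contained argument were preferred one could instead solve $\det M=0$ for one entry (say $m_{33}$) in terms of the others and verify directly, by completing the square, that the quadratic form $x^{\top}Mx$ is everywhere $\leq 0$; I would nevertheless take the interlacing route as cleaner and more transparent.
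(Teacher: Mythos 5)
Your proof is correct, and it takes a genuinely different route from the paper. The paper's argument is a perturbation of the $(3,3)$ entry: one modifies $M$ to $A(t)=M+tE$ (where $E$ has a single nonzero entry in the $(3,3)$ slot) so that the leading principal minors of $A(t)$ satisfy the \emph{strict} Sylvester criterion for negative definiteness, and then lets $t\to 0$ to conclude that $M$, as a limit of negative definite matrices, is nonpositive definite. (Incidentally, the paper's write-up has a sign slip: with $e_{33}=+1$ one gets $\det A(t)>0$, whereas negative definiteness of a $3\times 3$ matrix requires $\det A<0$; the intended perturbation is $A(t)=M-tE$, i.e.\ $e_{33}=-1$, which gives $\det A(t)=-t\,(m_{11}m_{22}-m_{12}^2)<0$.) You instead first show the $2\times 2$ leading block $M_0$ is negative definite, invoke Cauchy's interlacing theorem to pin down the two smaller eigenvalues of $M$ as strictly negative, and then use $\det M=0$ to conclude the remaining eigenvalue must be exactly $0$. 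Both arguments are short and correct; yours is more conceptual and avoids delicate sign bookkeeping at the price of invoking interlacing, while the paper's perturbation argument stays entirely within the (strict) Sylvester criterion and a limiting step.
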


\begin{proof}
Let $E$ be a matrix with all entries being $0$ except for $e_{33}=1$. Consider $t>0$ and $A:=A(t):=M+tE$. It is easy to see that $a_{11}<0$, $a_{11} a_{22}-a_{12}a_{21} >0$, and $\det\, A=t\cdot  (m_{11} m_{22}-m_{12}m_{21} )>0$ when $t>0$. By Sylvester criterion, matrices $A(t)$, $t>0$, are all negatively definite. Therefore, tending $t$ to $0+$, we obtain, that $M$ is nonpositive definite.
\end{proof}

We need the following lemma, which is in spirit of \cite{VaVo}.
\begin{lemma}
Let $L_I$ be given by
$$
L_I = \frac{1}{|I|}\sli_{J\subset I}a_J \av{u}{J}\av{v}{J} |J|.
$$
Let $A_I$ given by 
$A_I = \frac{1}{|I|}\sli_{J\subset I}a_J |J|$. Suppose that it is bounded by $1$ for any dyadic $I$ (Carleson condition).
If for any dyadic interval $I$ we have that $\av{u}{I} \av{v}{I} \leqslant 1$, then it holds that for any  dyadic interval $I$ we have $L_I \leqslant P \sqrt{\av{u}{I} \av{v}{I}}$.
\end{lemma}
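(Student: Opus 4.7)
The plan is to prove this Bellman-style estimate in the spirit of \cite{VaVo}. From the recursions
\[
L_I = a_I u_I v_I + \frac{L_{I_+} + L_{I_-}}{2}, \qquad A_I = a_I + \frac{A_{I_+} + A_{I_-}}{2},
\]
and the constraint $u_J v_J \leq 1$ at all scales, the task reduces to producing a function $\mathcal{B}(u, v, A)$ on the domain $\Omega = \{u, v \geq 0,\ uv \leq 1,\ 0 \leq A \leq 1\}$ with $0 \leq \mathcal{B}(u, v, A) \leq P\sqrt{uv}$ and satisfying the main inequality
\[
\mathcal{B}(u, v, A) - \frac{\mathcal{B}(u_+, v_+, A_+) + \mathcal{B}(u_-, v_-, A_-)}{2} \geq a\,uv
\]
for all admissible splits $u = (u_+ + u_-)/2$, $v = (v_+ + v_-)/2$, $A = a + (A_+ + A_-)/2$ with $a \geq 0$ keeping all data in $\Omega$. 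Granted such a $\mathcal{B}$, the telescoping Green's-formula argument used in the proof of Theorem \ref{thenglav} yields $L_I \leq \mathcal{B}(u_I, v_I, A_I) \leq P\sqrt{u_I v_I}$.

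The natural ansatz is $\mathcal{B}(u, v, A) = P \sqrt{uv}\,\varphi(A)$ with $\varphi : [0, 1] \to [\tfrac{1}{2}, 1]$ smooth, decreasing and concave, $\varphi(0) = 1$, for instance $\varphi(A) = 1 - A/2$. Writing $W := \sqrt{uv}$, $W_\pm := \sqrt{u_\pm v_\pm}$ and $\Delta := W - (W_+ + W_-)/2 \geq 0$, the main inequality breaks into a nonnegative concavity defect $\Delta\cdot\varphi(A)$, a Carleson gain of size $c\,a\,W$ produced by $\varphi(A) - \varphi(\tfrac{A_++A_-}{2}) \geq c\,a$, and signed cross-terms of the form $\tfrac{1}{2}[A_+(W_+ - W) + A_-(W_- - W)]$ multiplied by derivatives of $\varphi$. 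Since $uv \leq 1$ forces $uv \leq W$, the right-hand side $a\,uv$ is absorbed as soon as one secures a term of size $c\,a\,W$; the cross-terms are then controlled using $A_\pm \leq 1$ together with the identity $W_+ + W_- = 2(W-\Delta)$.

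The main technical obstacle is that $\sqrt{uv}$ has a rank-one Hessian, so its concavity defect $\Delta$ vanishes along the null direction $u_+v_- = u_-v_+$; along this direction the absorption of $a\,uv$ must come entirely from the $\varphi(A)$ piece, and the signed cross-terms become the sharp constraint that fixes the choice of $\varphi$ and $P$. As a consistency check (and an explicit alternative that bypasses the Bellman construction) I would also run doubling stopping times separately on $u$ and on $v$: the combined stopping family $\mathcal{F}$ is sparse with disjoint halos $E_F$ satisfying $|E_F| \geq c|F|$, and each non-stopped $J \subset F$ has $u_J \leq 2u_F$ and $v_J \leq 2v_F$, so the Carleson property gives $L_I|I| \leq C\sum_F u_F v_F|F| \leq C'\sum_F u_F v_F |E_F|$; applying $u_F v_F \leq \sqrt{u_F v_F}$ (from $u_Fv_F \leq 1$) and the Cauchy--Schwarz inequality on the disjoint halos yields $\sum_F \sqrt{u_Fv_F}|E_F| \leq \sqrt{u(I)\,v(I)} = |I|\sqrt{u_I v_I}$, which is exactly the desired bound.
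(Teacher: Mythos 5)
Your proposal identifies the correct Bellman set-up but both of your suggested routes have genuine gaps.

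\textbf{The separable Bellman ansatz.} The ansatz $\mathcal{B}=P\sqrt{uv}\,\varphi(A)$ cannot be made to work. First, the monotonicity in $A$ is backwards: since $A_I=a_I+\tfrac12(A_{I_+}+A_{I_-})\ge\tfrac12(A_{I_+}+A_{I_-})$ and you need $\mathcal{B}(u,v,A)-\tfrac12\bigl[\mathcal{B}(u_+,v_+,A_+)+\mathcal{B}(u_-,v_-,A_-)\bigr]\ge a\,uv$, the function $\mathcal{B}$ must be \emph{increasing} in $A$; a decreasing $\varphi$ makes the $A$-term come out with the wrong sign. Second, and more fundamentally, the obstacle you flagged is fatal for any non-constant $\varphi$: writing $W=\sqrt{uv}$ and testing the Hessian of $W\varphi(A)$ along the direction $(du,dv,dA)=(u,v,t)$, which is the null direction of the rank-one Hessian of $W$, gives
$$
\bigl\langle d^2\bigl(W\varphi(A)\bigr)\,(u,v,t),\,(u,v,t)\bigr\rangle
= W\bigl(\varphi''(A)\,t^2 + 2\varphi'(A)\,t\bigr),
$$
a quadratic in $t$ with a nonzero linear term whenever $\varphi'(A)\ne 0$. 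It vanishes at $t=0$ and changes sign nearby, so the form is never $\le 0$. The paper's function $T(u,v,A)=100\sqrt{uv}-\tfrac{uv}{A+1}$ is precisely engineered to avoid this cross-term: the $\sqrt{uv}$ piece carries no $A$-dependence (so it contributes $0$ along its null direction), while the subtracted $\tfrac{uv}{A+1}$ piece supplies $T'_A=\tfrac{uv}{(A+1)^2}\ge\tfrac14 uv$ \emph{and} is itself concave along that direction, since
$$
\bigl\langle d^2\bigl(-\tfrac{uv}{A+1}\bigr)\,(u,v,t),\,(u,v,t)\bigr\rangle=-\frac{2uv}{(A+1)^3}(A+1-t)^2\le 0.
$$
The paper then verifies full concavity via the degenerate Sylvester criterion of Lemma \ref{Sy} (the $3\times 3$ Hessian determinant is $0$), plus a short argument that the relevant Taylor segments remain in a slightly enlarged region $\widetilde G$ where the Hessian computation still closes.

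\textbf{The stopping-time fallback.} The Cauchy--Schwarz step hides a false inequality. You need $\sum_F u_F|E_F|\lesssim u(I)$ and $\sum_F v_F|E_F|\lesssim v(I)$, but these are not consequences of sparseness: they are $L^1$-Carleson embeddings, which already fail for pure $u$-stopping. Take $u=2^N\chi_{[0,2^{-N}]}$ on $I=[0,1]$; the stopping chain is $F_j=[0,2^{-3j}]$ with $u_{F_j}\sim 2^{3j}$ and $|E_{F_j}|\sim 2^{-3j}$, so $\sum_j u_{F_j}|E_{F_j}|\sim N$ while $u(I)=1$. Adding the $v$-stops (for a $v$ chosen so that $u_Jv_J\le 1$) does not rescue this sum. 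The loss is exactly in the splitting: in this example $\sqrt{u_{F_j}v_{F_j}}\sim 1$ and $\sum_j\sqrt{u_{F_j}v_{F_j}}|E_{F_j}|\sim 1$, comparable to $\sqrt{u(I)v(I)}$, so the quantity you really need is controlled, but the two Cauchy--Schwarz factors you split it into are not.
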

\begin{proof}
It is true since the function $T(u, v, A)=100\sqrt{uv}-\frac{uv}{A+1}$ is concave enough in the domain $G:=\{0\leqslant A\leqslant 1, \; uv < 1, \; u,v\geqslant 0\}$.  One can adapt the proof from \cite{VaVo}. 

First, we need to check that the function $T(x,y,A)$ is concave in $G$. Clearly, $T''_{A,A} < 0$. Next, 
\begin{equation}
\label{detT}
\det \begin{pmatrix} T''_{A,A} & T''_{A,v} \\ T''_{A, v} & T''_{v,v}
\end{pmatrix} = \frac{x}{y(A+1)^4} \cdot (50(A+1)\sqrt{xy}-xy) > 0\,.
\end{equation}
This expression is non-negative, because $A+1\geqslant 1$, and $\sqrt{uv}\leqslant 1$.
Finally,
$$
\det \begin{pmatrix} T''_{A,A} & T''_{A,v} & T''_{A,u} \\ T''_{A, v} & T''_{v,v} &  T''_{v,u} \\  T''_{A,u} &  T''_{v,u} &  T''_{u,u}
\end{pmatrix} = 0.
$$

Therefore, by Lemma \ref{Sy} we conclude that $T(u,v, A)$ is a concave function.

Next, 
$$
T'_A = \frac{uv}{(A+1)^2} \geqslant \frac{1}{4} uv.
$$

Thus, if we fix three points $(u, v, A)$, $(u_\pm, v_\pm, A_\pm)$, such that $u=\frac{u_+ + u_-}{2}$, $v=\frac{v_+ + v_-}{2}$, and $A=\frac{A_+ + A_-}{2} + a$, we get by the Taylor formula:
$$
T(u,v,A) - \frac{T(u_+, v_+, A_+)+T(u_-, v_-, A_-)}{2} \geqslant a T'_A(u,v,A) \geqslant C\, a\cdot uv.
$$
This requires the explanation. The Taylor formula we used  has a remainder with the second derivative at the intermediate point $P_\pm$ on segments $S_+ :=[(u,v, \frac{A_-+A_+}{2}),  (u,v, A_+)],$ $S_-:= [(u,v, \frac{A_-+A_+}{2}),  (u,v, A_-)]$.  One of this segments definitely lies inside domain $G$, where $T$ is concave, and this remainder will have the right sign. However the second segment can easily stick out of domain $G$, because $G$ itself is not convex. But notice that if, for example, $S_+$ is not inside $G$, still $(x,y, B)\in S_+$ implies that one of the coordinates, say $x$, must be smaller than $u$. Then $y$ can be bigger than $v$, but not much. In fact, 
$$
v_+- v= v-v_-\,\Rightarrow \, v_+ \le 2v-v_-\le 2v\,.
$$
Therefore, $y\le v_+\le 2v$. Then we have that $xy\le 2uv \le 2$. Let us consider $\widetilde{G}:=\{(x,y, A): 0\le A\le 1,\; x, y\ge 0,\; 0\le xy \le 2\}$. Now come back to the proof that $T$ is concave in $G$. In \eqref{detT} we used that  if $(x, y, A)\in G$, then $xy\le 1$ and the corresponding determinant is non-negative. But the same non-negativity in \eqref{detT} holds under slightly relaxed assumption $(x,y, A)\in \widetilde{G}$.

We notice that our $u_I=\av{u}{I}$, $v_I=\av{v}{I}$, and $A_I= \frac{1}{|I|}\sli_{J\subset I}a_I |I|$ have the dynamics above. The rest of the proof reads exactly as the proof of the Theorem \ref{thenglav}.
\end{proof}

\begin{proof}[Proof of the Theorem \ref{mainineq}]
We start with the following corollary from the Taylor expansion.
Suppose we have three tuples $(N, A)$, $(N_\pm, A_\pm)$, such that:
$$
N = \frac{N_+ + N_-}{2}; \; \; \; A = \frac{A_+ + A_-}{2} + m.
$$
Moreover, suppose there are $(u, v, L)$, $(u_\pm, v_\pm, L_\pm)$, such that
$$
u=\frac{u_+ + u_-}{2}; \; \; \; v=\frac{v_+ + v_-}{2};\;\;\; L=\frac{L_+ + L_-}{2} + m\cdot uv.
$$

Then, since $d^2B_1 \leqslant 0$, we write
$$
B_1(N_+, A_+) \leqslant B_1(N, A)+(B_1)'_N (N, A)(N_+-N) + (B_1)'_A(N, A)(A_+-A).
$$
Thus,
$$
B_1(N,A) - \frac{B_1(N_+, A_+)+B_1(N_-, A_-)}{2}\geqslant (B_1)'_A(N,A) \cdot (A-\frac{A_+ + A_-}{2}) = m\cdot (B_1)'_A(N,A)\geqslant m \frac{N}{\Psi_0(N)}.
$$
Similarly,
$$
B_2(u,v,L,A) - \frac{B_2(u_+, v_+, L_+, A_+)+B_2(u_-, v_-, L_-, A_-)}{2}\geqslant m\cdot ((B_2)'_A(u,v, L, A) + uv(B_2)'_L)
$$


First, suppose that $L_I\leqslant \frac{u_I v_I}{\ep(\frac{1}{u_I v_I})}$. 
Then, using $m=a_I$ we get
\begin{multline}
\mathcal{B}(I)-\frac{\mathcal{B}(I_+)+\mathcal{B}(I_-)}{2} \geqslant \\ \geqslant \ili \left(B_1(N_I(t), A_I) - \frac{B_1(N_{I_+}(t), A_{I_+})+B_1(N_{I_-}(t), A_{I_-})}{2}\right)dt + \\+
\left( B_2(u_I,v_I,L_I,A_I) - \frac{B_2(u_{I_+}, v_{I_+}, L_{I_+}, A_{I_+})+B_2(u_{I_-}, v_{I_-}, L_{I_-}, A_{I_-})}{2}\right)
\\ \geqslant 
 a_I\left((B_2)'_A(u_I,v_I,L_I,A_I) + u_I v_I (B_2)'_L(u_I,v_I,L_I,A_I)\right)+a_I \left( \ili  (B_1)'_A (N_I(t), A_I) dt \right) \geqslant \\
a_I \left(\ili \frac{N_I(t)}{\Psi_0(N_I(t))} dt - \delta_1 u_I L_I\right).
\end{multline}
The last inequality is true, since $(B_2)'_A \geqslant 0$ and $uv(B_2)'_L\geqslant -\delta_1 uL$ on the domain of $B_2$.
We use H\"older's inequality (and that $\int N_I(t)dt = u_I$) to get:
\begin{multline}
\ili \frac{N_I(t)}{\Psi_0(N_I(t))} dt\geqslant \frac{u_I^2}{\ili N_I(t)\Psi_0(N_I(t))dt} \geqslant C \frac{u_I^2}{\ili N_I(t)\Psi(N_I(t))dt\;\ep\!\left(\frac{\ili N_I(t)\Psi(N_I(t))dt}{u_I}\right)}.
\end{multline}
Last inequality is  Theorem \ref{selfimp}.
Therefore, we get that
\begin{multline}
\ili \frac{N_I(t)}{\Psi_0(N_I(t))} dt \geqslant
u_I \cdot \frac{u_I}{\|u\|_{L^\Phi_I}}\cdot \frac{1}{\ep\left(\frac{\|u\|_{L^\Phi_I}}{u_I}\right)}=  u_I \frac{u_Iv_I}{\|u\|_{L^\Phi_I}v_I}\cdot \frac{1}{\ep\left(\frac{\|u\|_{L^\Phi_I}v_I}{u_Iv_I}\right)}.
\end{multline}
We are going to use the one-sided bump condition $\|u\|_{L^\Phi_I}v_I\leqslant B_{u,v}\leqslant 1$. Thus,
$$
u_Iv_I \leqslant \frac{u_I v_I}{\|u\|_{L^\Phi_I}v_I}.
$$
Since the function $x\mapsto \frac{x}{\ep(\frac1x)}$ is increasing near $0$ (on $[0, c_\ep]$) and bounded from below between $c_\ep$ and $1$, we get
$$
\frac{u_Iv_I}{v_I\,\|u\|_{L^\Phi_I}}\cdot \frac{1}{\ep\left(\frac{v_I\,\|u\|_{L^\Phi_I}}{u_Iv_I}\right)}\geqslant C\cdot u_I v_I \frac{1}{\ep(\frac{1}{u_Iv_I})}.
$$
Therefore,
$$
\ili \frac{N_I(t)}{\Psi_0(N_I(t))} dt \geqslant C u_I \frac{u_I v_I}{\ep(\frac{1}{u_I v_I})}\geqslant C u_I L_I.
$$
The last inequality follows from our assumption that $L_I\leqslant \frac{u_I v_I}{\ep(\frac{1}{u_I v_I})}$.
Putting everything together, we get
$$
\mathcal{B}(I)-\frac{\mathcal{B}(I_+)+\mathcal{B}(I_-)}{2} \geqslant a_I u_I L_I (C-\delta_1)\geqslant C_1 \cdot a_I u_I L_I.
$$
\bigskip

We proceed to the case $L_I\geqslant \frac{u_I v_I}{\ep(\frac{1}{u_I v_I})}$. Then we write
$$
\mathcal{B}(I)-\frac{\mathcal{B}(I_+)+\mathcal{B}(I_-)}{2} \geqslant B_2(u_I, v_I, L_I, A_I) - \frac{B_2(u_{I_+}, v_{I_+}, L_{I_+}, A_{I_+})+B_2(u_{I_-}, v_{I_-}, L_{I_-}, A_{I_-})}{2}.
$$
This is obviously true, since $(B_1)'_A\geqslant 0$ everywhere and $B_1$ is a concave function. Next, we use
\begin{multline}
B_2(u_I, v_I, L_I, A_I) - \frac{B_2(u_{I_+}, v_{I_+}, L_{I_+}, A_{I_+})+B_2(u_{I_-}, v_{I_-}, L_{I_-}, A_{I_-})}{2}\geqslant \\
a_I\left((B_2)'_A + uv (B_2)'_L\right)\geqslant ca_I\cdot u_I L_I,
\end{multline}
by the property of $B_2$. Therefore, we are done.
\end{proof}

\section{Fourth step: building the function $B_2$}
\label{B2}
In order to finish the proof, we need to build functions $B_1$ and $B_2$. In this section we will present the function $B_2$. Denote
$$
\vf(x)=\frac{x}{\ep(\frac1x)}.
$$
This function is increasing (by regularity assumptions on $\ep$ in Theorem \ref{main}), therefore, there exists $\vf^{-1}$. We introduce
$$
B_2(u,v,L,A)= Cu - \frac{L^2}{v} \ili_{\frac{A+1}{L}}^{\infty} \vf^{-1}\Big(\frac{1}{x}\Big)\,dx.
$$
Let us explain why the integral is convergent. In fact, using change of variables, we get
$$
\ili_1^{\infty}\vf^{-1}\Big(\frac{1}{x}\Big)\,dx = \ili_0^{\vf^{-1}(1)} \frac{\ep(\frac 1t) - t\frac{d}{dt}(\ep(\frac 1t))}{t}\,dt,
$$
which converges at $0$ by assumption \eqref{intep}.

Therefore, since $L_I\leqslant C \sqrt{u_I v_I}$, we get
$$
0\leqslant B(u_I, v_I, L_I, A_I) \leqslant Cu_I. 
$$

\bigskip

Next,
\begin{multline}
(B_2)'_A + uv(B_2)'_L = \frac{L}{v}\vf^{-1}\Big(\frac L{A+1}\Big) - u(A+1)\vf^{-1}\Big(\frac L{A+1}\Big) - 2uL\ili_{\frac{A+1}{L}}^{\infty}\vf^{-1}\Big(\frac 1x\Big)\,dx =
 \\
 uL \cdot\left(\frac{1}{uv}\vf^{-1}\Big(\frac L{A+1}\Big) - \frac{A+1}{L}\vf^{-1}\Big(\frac L{A+1}\Big) - 2\ili_{\frac{A+1}{L}}^{\infty}\vf^{-1}\Big(\frac 1x\Big)\,dx \right)
\end{multline}
We use that $L\geqslant \frac{uv}{\ep\big(\frac{1}{uv}\big)}=\vf(uv)$. Then $\vf^{-1}(L)\geqslant uv$, and, since $A+1\sim 1$, we get
$$
\frac{1}{uv}\vf^{-1}\Big(\frac L{A+1}\Big) \geqslant C_1.
$$
Moreover, since $uv\leqslant \delta$ is a small number, we get that $L$ is small enough for the integral $\ili_{\frac{A+1}{L}}^{\infty}\vf^{-1}(\frac 1x)dx$ to be less than a small number $c_2$. Finally, let us compare $\frac{A+1}{L}\vf^{-1}(\frac L{A+1})$ with a small number $c_3$. Since $L$ is small, we can write
$$
\ep\Big(\frac 1{c_3 L}\Big) \leqslant c_3.
$$
We do it, since $c_3$ is fixed from the beginning (say, $c_3=\frac{1}{10}$). Thus,
$$
L\leqslant \vf(c_3L).
$$
This implies
$$
\vf^{-1}(L)\leqslant c_3 L,
$$
thus
$$
\frac1L \vf^{-1}(L)\leqslant c_3.
$$
Since $A+1\sim 1$, we get the desired.
Therefore, if $L\geqslant \frac{uv}{\ep(\frac{1}{uv})}=\vf(uv)$ then $(B_2)'_A + uv(B_2)'_L \geqslant cuL$. 

Moreover, in the whole domain of $B_2$ we get, since $(B_2)'_A \geqslant 0$, 
$$
(B_2)'_A + uv(B_2)'_L \geqslant uv (B_2)'_L \geqslant -(c_2+c_3)uL
$$
with small $c_2+c_3$. This is a penultimate inequality in the statement of Theorem \ref{mainineq}.

Now we shall prove the concavity of $B_2$.  For this it is enough to prove the concavity of the function of three variables: $B(v, L, A):= B_2(u,v, L, A) -Cu$. Clearly, $(B)''_{vv} < 0$, which is obvious. 
Also, it is a calculation that
$$
\det \begin{pmatrix}
&(B)''_{vv} &&(B)''_{vA} &&&(B)''_{vL}\\
&(B)''_{vA} &&(B)''_{AA} &&&(B)''_{AL}\\
&(B)''_{vL} &&(B)''_{AL} &&&(B)''_{LL}
\end{pmatrix}
=0.
$$
Thus, we need to consider the matrix
$$
\begin{pmatrix}
&(B)''_{vv} &&(B)''_{vA} \\
&(B)''_{vA} &&(B)''_{AA}
\end{pmatrix}
$$
and to prove that its determinant is positive.
We denote $f(t)=\vf^{-1}(t)$, to simplify the next formula.
The calculation shows that the determinant above is equal to
$$
g\Big(\frac{L}{A+1}\Big):=-f\Big(\frac{L}{A+1}\Big)^2 + 2\Big(\frac{L}{A+1}\Big)^2\cdot f'\Big(\frac{L}{A+1}\Big) \ili_{\frac{A+1}{L}}^{\infty} f\Big(\frac1x\Big)\,dx.
$$
We need to prove that $g$ is positive near $0$. First, $g(0)=0$. Next,
\begin{multline}
g'(s) = -2 f(s)f'(s) + 4s f'(s) \ili_{\frac1s}^{\infty} f\Big(\frac1x\Big)\,dx + 2s^2 f''(s)\ili_{\frac1s}^{\infty} f\Big(\frac1x\Big)\,dx + 2 f'(s) f(s) = 
\\
4s f'(s) \ili_{\frac1s}^{\infty} f\Big(\frac1x\Big)\,dx + 2s^2 f''(s)\ili_{\frac1s}^{\infty} f\Big(\frac1x\Big)\,dx.
\end{multline}

We notice that $f'$ is positive, since $\vf^{-1}$ is increasing near $0$. Moreover, by the fact that $\vf$ is  strictly monotonous,  and by concavity of $t\ep(t)$ (see Theorem \ref{propep}), we get that  $\vf$ is {\it strictly} convex, hence $\vf^{-1}$ is  strictly convex near $0$ as well. That is, $f''$ is also positive. 

Therefore, $g'(s)>0$, and so $g(s)> g(0)=0$.   The application of Lemma \ref{Sy} finishes the proof of concavity of $B$ (and therefore  of the concavity of $B_2$). We are done.

\begin{zamech}
We can always think that the bump constant $B_{u,v}\leqslant C_\ep$, where $C_\ep$ is such that $L_I\leqslant c_\ep$. Then we can use the monotonicity and concavity of the function $\vf$ near $0$.
%
\end{zamech}

\section{Fifth step: building the function $B_1$}
\label{B1}
We present the function from \cite{NRTV}.
$$
B_1(N, A) = CN - N\ili_{0}^{\frac NA}\frac{ds}{s\Psi_0(s)}
$$

\section{Why function $\ep$ was so needed?}
\label{whyB2}

Integrability condition \eqref{intep} on function $\ep$ was used in constructing $B_2$ in a very essential way. A natural question arises, why not to get rid of $\ep$? 
Suppose  we can build function $B$ in the domain $\widehat\Omega$ such that
$$
\widehat\Om = \{(u,v,L,A)\colon 0\leqslant A \leqslant 1; \; u,v,L \geqslant 0; \; uv\leqslant \delta; \; L\leqslant P\cdot \sqrt{uv}\}.
$$
\begin{equation}
\label{hypoB}
\begin{aligned}
& 0\leqslant B\leqslant u;\\
& (B)'_A \geqslant 0\\ 
&(B)'_A\geqslant c\cdot u\cdot L, \; \mbox{when} \; P \sqrt{uv}\ge L\geqslant uv; \\
&uv(B)'_L \geqslant -\delta_1 uL, \; \mbox{for sufficiently small $\delta_1$ in the whole of $\widehat\Omega$}; \\
& -d^2\,B \geqslant 0.
\end{aligned}
\end{equation}
Looking at the proof of Theorem \ref{mainineq} we immediately see that this $B$ can replace our $B_2$ in this proof and, thus, give us \eqref{glav}
{\it without} any extra conditions on $\Phi$ or corresponding $\Psi$ apart a necessary condition of integrability: $\int^\infty\frac{dt}{\Phi(t)}<\infty$.

Hence, the existence of such a function would prove (as we have explained before) the one-sided bump conjecture in full generality. 

\medskip

\begin{zamech}
However, we are inconclusive whether $B$ as in \eqref{hypoB} exists. We ``almost" prove below that it does not exist.
\end{zamech}

\medskip

Put
$$
\Om := \{(u,v, L, A)\colon 0\le A\le 1;\; u, v, L\ge 0;\; uv \le 1; P\sqrt{uv} \ge L\ge uv\}\,,
$$
and
$$
\,\Om_0: = \{(u,v, A)\colon 0\le A\le 1;\; u, v \ge 0;\; uv \le 1\}\,.
$$

\begin{theorem}
\label{shep}
It is impossible to find a smooth function $B$, defined on $\Om$, such that:
\begin{align}
& 0\leqslant B \leqslant u;\\
&B'_A \geqslant c\cdot u\cdot L, \; \mbox{when} \; P\cdot \sqrt{uv}\ge L\geqslant uv; 
\\
& -d^2\,B \geqslant 0;
\end{align}
which satisfies one extra condition:
\begin{equation}
\label{concB0}
B_0(u,v, A) := \sup_{L\colon uv \le L\le P\sqrt{uv}} B(u,v, L, A) \;\;\;\text{is concave in}\;\;\;\Om_0 \,.
\end{equation}
\end{theorem}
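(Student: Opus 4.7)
The plan is to assume that such a smooth $B$ exists and derive a contradiction by passing to $B_0$, using the envelope theorem to convert the pointwise differential inequality on $B$ into a pointwise inequality on $B_0$, and then integrating in $A$ while exploiting $0\le B_0\le u$ and the joint concavity hypothesis \eqref{concB0}.

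Concretely, since $B(u,v,\fdot,A)$ is concave and the admissible window $L\in[uv,P\sqrt{uv}]$ is compact, for each $(u,v,A)\in\Omega_0$ with $uv>0$ there is a maximizer $L^*(u,v,A)$, and by the envelope theorem
\begin{equation*}
(B_0)'_A(u,v,A)=B'_A(u,v,L^*(u,v,A),A)\ge c\,u\,L^*(u,v,A)\ge c\,u\cdot uv,
\end{equation*}
the last step using the trivial lower bound $L^*\ge uv$. Integrating from $A=0$ to $A=1$ and using $0\le B_0\le u$,
\begin{equation*}
u\ge B_0(u,v,1)-B_0(u,v,0)\ge c\int_0^1 u\,L^*(u,v,A)\,dA,
\end{equation*}
so $\int_0^1 L^*(u,v,A)\,dA\le 1/c$, which in particular yields $uv\le 1/c$ throughout $\Omega_0$. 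For $c\ge 1$ this already contradicts $uv\le 1$ at points with $uv$ close to $1$, and we are done.

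For the genuinely interesting small-$c$ regime, integration alone does not close the argument. Here the plan is to invoke the full joint concavity of $B_0$ via its Hessian: the envelope identity implies $(B_0)''_{AA}=B''_{AA}-(B''_{AL})^2/B''_{LL}$ together with analogous formulas for the mixed second derivatives, in which the unknown $L^*$ and its partials appear. Applying Lemma \ref{Sy} in the $(u,v,A)$-variables to the Hessian of $B_0$, and combining with the lower bound on $(B_0)'_A$ above, should force $L^*(u,v,A)$ to live close to the upper boundary $P\sqrt{uv}$ on a set of $A$'s of positive measure; then the averaged inequality $\int_0^1 L^*\,dA\le 1/c$ upgrades to $P\sqrt{uv}\lesssim 1/c$, incompatible with taking $uv$ close to $1$ for any fixed $P$.

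The main obstacle is precisely this propagation step: quantitatively lifting the weak lower bound $L^*\ge uv$ to the sharp one $L^*\asymp P\sqrt{uv}$ using only joint concavity of $B_0$. This difficulty is the mirror image of the divergence of $\int^\infty dt/t$ that prevents constructing $B_2$ when the $\ep$-factor is removed (Section \ref{B2}), and bridging it appears to require reintroducing exactly the integrability condition \eqref{intep} we are trying to do without\,---\,which is presumably why the authors describe this section as an \emph{almost} proof rather than a proof.
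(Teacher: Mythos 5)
Your first step, using the envelope theorem to pass from $B$ to $B_0$ and obtain $(B_0)'_A \ge c\,u\,L^* \ge c\,u^2 v$ together with $0\le B_0\le u$ and concavity of $B_0$, is correct and is exactly the content of the paper's Lemma \ref{B0} (the paper pays a bit more attention to the case when the maximizer sits on the boundary $L^*\in\{uv,\,P\sqrt{uv}\}$, foliating by $uv=\mathrm{const}$ to see $\partial L^*/\partial A=0$ there, but the envelope idea is the same).

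The second step is where the proposal breaks down, and you have correctly sensed this, but the diagnosis is off. Integrating $(B_0)'_A \ge c\,u^2 v$ in $A$ at a \emph{fixed} point $(u,v)$ only uses the one-variable information along a line segment; it gives $uv\le 1/c$, which is no contradiction for small $c$, and no amount of Hessian bookkeeping at a fixed $(u,v)$ will improve this, because a single point of $\Omega_0$ carries no obstruction: one can easily write down a concave function on $\{0\le A\le 1\}\times\{uv\le 1\}$ with $0\le B_0\le u$ and $(B_0)'_A\ge c\,u^2 v$ for small $c$ (e.g.\ $B_0=c\,u^2vA$ with $c$ tiny is nonnegative, bounded by $u$, has the right $A$-derivative, and can be made concave after a small correction). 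The non-existence is not a pointwise phenomenon; it is a \emph{global} one over the full domain.

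What the paper actually does in Lemma \ref{B0nonexi} is to read the three properties of $B_0$ as a Bellman function statement. Setting $\cB(I):=B_0(\av{u}{I},\av{v}{I},A_I)$ on a dyadic tree, Taylor's formula combined with $-d^2B_0\ge 0$ and $(B_0)'_A\ge c\,u^2v$ gives the tree inequality
\begin{equation*}
|I|\,\cB(I)-\bigl(|I_+|\,\cB(I_+)+|I_-|\,\cB(I_-)\bigr)\;\ge\; c\,\alpha_I\,|I|\,\av{u}{I}^2\av{v}{I}\,,
\end{equation*}
and telescoping plus $0\le B_0\le u$ yields a Carleson-embedding-type inequality
\begin{equation*}
c\sum_{I\subset I_0}\av{u}{I}^2\av{v}{I}\,\alpha_I\,|I|\;\le\;\int_{I_0} u\,dx
\end{equation*}
for \emph{every} pair $u,v$ with $\av{u}{I}\av{v}{I}\le 1$ and every Carleson sequence $\{\alpha_I\}$. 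The contradiction then comes from constructing explicit $u$, $v$, $\alpha_I$: one stratifies by the stopping generations $\cG_k=\{\text{maximal }I:\av{u}{I}\ge 3^k\}$, builds $v$ from bottom to top so that $\av{u}{I}\av{v}{I}$ is essentially $1$ on all stopping intervals, takes $\alpha_I=1/3$ on stopping intervals and $0$ elsewhere, and observes that the left side then dominates $\int M^d u\,dx$. Choosing $u$ with $\int u<\infty$ but $\int M^d u=\infty$ (e.g.\ $u(x)\sim \bigl(x\log^2(1/x)\bigr)^{-1}$ near $0$) kills the embedding. So the concavity is used \emph{through the tree}, not through a pointwise Hessian formula; the contradiction lives in the unboundedness of the dyadic maximal function on $L^1$, which is the true ``mirror image'' of the divergence of $\int^\infty dt/t$ obstructing the construction of $B_2$ without $\ep$ — not the propagation of $L^*$ toward $P\sqrt{uv}$ you were hoping for.
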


The proof will consist of two parts. First we show that if function $B$ in  Theorem \ref{shep} exists, then a certain other function must exist.
Only then we come to a contradiction with the existence of this new function built in the lemma that now follows.

\begin{lemma}
\label{B0}
Given a smooth function $B$ from Theorem \ref{shep} one can build a function $B_0$ such that it is defined in $\Om_0$
and such that
\begin{align}
& 0\leqslant B_0 \leqslant u;\\
&(B_0)'_A \geqslant c\cdot u^2\cdot v ; 
\\
& -d^2\,B_0 \geqslant 0.
\end{align}
\end{lemma}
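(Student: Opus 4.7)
The obvious candidate for $B_0$ is precisely the function already named in the extra hypothesis of Theorem~\ref{shep}:
$$
B_0(u,v,A) := \sup_{L\in[uv,\,P\sqrt{uv}]} B(u,v,L,A).
$$
Since the constraining interval is compact and $B$ is continuous, the supremum is attained at some (not necessarily unique) maximizer $L^{\ast}=L^{\ast}(u,v,A)\in[uv,P\sqrt{uv}]$. I would verify the three required properties in turn.

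The pointwise bound $0\le B_0\le u$ is an immediate consequence of $0\le B\le u$ on $\Om$: a supremum of values lying in $[0,u]$ still lies in $[0,u]$. The concavity assertion $-d^2 B_0\ge 0$ on $\Om_0$ is exactly the additional hypothesis \eqref{concB0} placed on $B$, so nothing is left to prove.

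The only real content is the derivative lower bound, which I would obtain by a standard envelope/sup-of-family argument. Fix $(u,v,A_1)\in\Om_0$, take $A_2>A_1$, and let $L_1^{\ast}$ be a maximizer of $L\mapsto B(u,v,L,A_1)$ on $[uv,P\sqrt{uv}]$. Then
\begin{align*}
B_0(u,v,A_2)-B_0(u,v,A_1)
&\;\ge\; B(u,v,L_1^{\ast},A_2)-B(u,v,L_1^{\ast},A_1)\\
&\;=\;\int_{A_1}^{A_2} B'_A(u,v,L_1^{\ast},A)\,dA\\
&\;\ge\; c\,u\,L_1^{\ast}\,(A_2-A_1)
\;\ge\; c\,u^2 v\,(A_2-A_1),
\end{align*}
where the last two inequalities use, respectively, the pointwise estimate $B'_A\ge cuL$ on $\Om$ and the admissibility constraint $L_1^{\ast}\ge uv$. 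Dividing by $A_2-A_1$ and sending $A_2\to A_1^{+}$ yields $(B_0)'_A(u,v,A_1)\ge c u^2 v$. Even at points where $B_0$ fails to be differentiable in $A$, the concavity from \eqref{concB0} guarantees that one-sided derivatives exist and the above computation shows the bound for every supergradient, which is all that is needed downstream.

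The main obstacle here is not inside the present lemma at all; the lemma is a clean one-line reduction once \eqref{concB0} is in hand. The genuine difficulty — and the reason the authors state \eqref{concB0} as a separate hypothesis rather than deriving it — is that pointwise suprema of a concave function over a parameter need not be concave, so preserving concavity when passing from $B$ on $\Om$ to $B_0$ on $\Om_0$ is the structural obstruction. The subsequent part of the proof of Theorem~\ref{shep} must then contradict the existence of a function $B_0$ on $\Om_0$ with the three properties produced above; the present lemma's role is exactly to package the problematic object into one whose impossibility can be analyzed on the simpler three-dimensional domain $\Om_0$.
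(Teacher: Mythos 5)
Your proposal is correct, and it follows the same overall structure as the paper's argument: you use the identical definition $B_0(u,v,A)=\sup_{L\in[uv,P\sqrt{uv}]}B(u,v,L,A)$, dispose of the bounds $0\le B_0\le u$ and the concavity $-d^2B_0\ge 0$ in exactly the same way (the former is immediate, the latter is precisely the extra hypothesis~\eqref{concB0}). The one place you diverge — and improve — is the derivative bound. The paper differentiates $B_0$ via the chain rule,
$\partial_A B_0 = \partial_A B + \partial_L B\cdot\partial_A L^{\ast}$,
and then kills the second term by a case analysis: at interior maximizers $\partial_L B=0$, while on the set where the maximizer sits on the boundary $L^{\ast}=uv$ or $L^{\ast}=P\sqrt{uv}$ it argues, by foliating that region by the level surfaces $uv=\mathrm{const}$, that $\partial_A L^{\ast}=0$. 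This implicitly needs some regularity of the map $(u,v,A)\mapsto L^{\ast}(u,v,A)$ and a little care with openness of the relevant sets. Your envelope inequality
$B_0(u,v,A_2)-B_0(u,v,A_1)\ge B(u,v,L_1^{\ast},A_2)-B(u,v,L_1^{\ast},A_1)\ge c\,u\,L_1^{\ast}\,(A_2-A_1)\ge c\,u^2 v\,(A_2-A_1)$
gets the same conclusion in one stroke, needs no information whatsoever about how the maximizer depends on the parameters, and cleanly handles potential non-smoothness of $B_0$ by reading off the bound for one-sided derivatives (which is all that the downstream Taylor/telescoping argument in Lemma~\ref{B0nonexi} actually uses). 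So: same high-level approach, but a more elementary and more robust derivation of the key estimate.
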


\begin{proof}
Given $B$ in $\Omega$ consider a new function defined on $\Omega_0$:
$$
B_0(u, v, A) :=\max_{L\colon\;(u,v, A, L)\in \Omega}B(u,v, L, A) = \max_{L\colon \; uv \le L\le P\,\sqrt{uv}}B(u,v, L, A) \,.
$$
Call the point, where the maximum is attained $L(u,v, A)$. Fix $A, u$. Let set $I_{A, u}$ be the set of $v\in [0, 1/u]$ 
such that this maximum is attained strictly inside: $L(u, v, A)\in (uv, P\sqrt{uv})$.
Then for $v\in I_{A,u}$ we have
$ \frac{\pd B}{\pd L} (u, v, L(u,v, A), A) =0$.  Consequently
 \begin{equation}
 \label{rav}
 \frac{\pd B_0}{\pd A} =   \frac{\pd B}{\pd A} (u, v, L(u,v, A), A) +  \frac{\pd B}{\pd L}(u, v, L(u,v, A), A)\cdot \frac{\pd L}{\pd A}(u,v, A) \,.
 \end{equation}
 Using the middle property of $B$ from Theorem \ref{shep} we get
  \begin{equation}
  \label{oce}
 \frac{\pd B_0}{\pd A}  \ge c \cdot u\cdot uv = c\cdot u^2v
 \end{equation}
 to be satisfied on the closure of $I_{A,u}$ for all $A, u$. If this closure is not the whole $[0,1/u]$, 
 then its complement contains a sub-interval of $[0,1/u]$. Call this sub-interval $S$. On $S$ maximum in the definition of 
 $B_0$ is attained either for $L(u,v, A)= uv$ or $L(u,v, A)=P\,\sqrt{uv}$.  It is easy to see that we can 
 fix one of this choices (may be by making $S$ smaller).
 So suppose maximum is attained for $v\in S_{A,u}$ for $L(u,v, A)= uv$. Let us start to vary $A,u$ a bit. 
 We will see that the set of $A,u$, where the closure of $I_{A,u}$ is not the whole $[0,1/u]$ is an open set $G$.
 Foliating $\{(u, v,  \,A)\colon (A,u)\in G, v\in S_{A,u}\}$ by surfaces $uv=const$ we see that $\frac{\pd L(u,v, A)}{\pd A} =0$ on $G$.
 
 Then again $\frac{\pd B}{\pd L}(u, v, L(u,v, A), A)\cdot \frac{\pd L}{\pd A}(u,v, A) =0$, and \eqref{oce} is thus  valid because of  equality \eqref{rav}.

We are left to prove the concavity of $B_0$: $-d^2 B_0 \ge 0$.  But this concavity in guaranteed by our extra requirement \eqref{concB0}.
\end{proof}

\begin{zamech}
Function $B_0$ is a supremum of concave functions, and as such is not {\it automatically} concave. Only the infimum of concave functions is automatically concave. Therefore our extra requirement \eqref{concB0} seems very ad hoc and too strong. In fact, it is ``almost" no requirement at all. We cannot get rid of the word ``almost" though. However, let us explain that there is a very generic common situation when the supremum of the family of concave functions is indeed concave. 

It is really easy to see that when one has a concave function of several variables given in the {\it convex} domain, and one forms the supremum of it over varying one of its variables with all other variables fixed, then one gets a concave function again. 

We are doing ``almost" that in our construction of $B_0$ from $B$. The difference is that $\Omega$ is not convex anymore! And this is the only obstacle for deleteing the word ``almost" above. If not for that small obstacle the next lemma would prove that the set of functions having properties \eqref{hypoB} is empty.
\end{zamech}

Now we prove 
\begin{lemma}
\label{B0nonexi}
Function $B_0$ such that $-d^2\,B_0 \geqslant 0$,  $(B_0)'_A \geqslant c\cdot u^2\cdot v, c >0$,   $ 0\leqslant B_0 \leqslant u$ in $\Omega_0=\{(u,v, A)\colon 0\le A\le 1;\; u, v \ge 0;\; uv \le 1\}$ does not exist.
\end{lemma}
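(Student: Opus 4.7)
The plan is a proof by contradiction. Assume $B_0$ satisfying the three hypotheses exists on $\Omega_0$. First, I would extract the boundary behavior: the sandwich $0\le B_0\le u$ evaluated at $u=0$ forces $B_0(0,v,A)\equiv 0$, and therefore also $(B_0)'_A(0,v,A)\equiv 0$. Concavity of $B_0$ in $u$ at fixed $(v,A)$ together with $B_0(0,v,A)=0$ then gives the key monotonicity: $u\mapsto B_0(u,v,A)/u$ is non-increasing on $(0,1/v]$.

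Next, I would integrate $(B_0)'_A\ge c u^2 v$ over $A\in[0,1]$ and use $B_0(u,v,0)\ge 0$ to obtain $B_0(u,v,1)\ge c u^2 v$. Evaluating on the boundary hyperbola $u=1/v$ gives $B_0(1/v,v,1)\ge c/v$, i.e.\ $v B_0(1/v,v,1)\ge c$, and the monotonicity above propagates this to $B_0(u,v,1)\ge c u$ throughout $\Omega_0$. Invoking once more the concavity of $B_0$ in $A$ together with $B_0(u,v,0)\ge 0$ yields the global lower bound
\[
B_0(u,v,A)\;\ge\; c u A\qquad\text{on } \Omega_0.
\]

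At this point the most natural continuation would be to iterate: set $B_0^{(1)}:=B_0-c u A$ and hope it again satisfies the three hypotheses, so that $B_0\ge n c u A$ for every $n\in\mathbb N$, contradicting $B_0\le u$. \emph{This is where the main obstacle arises}: the corrector $c u A$ has Hessian with nonzero mixed entry $\partial^2_{uA}=c$, hence is not concave in $(u,A)$, so $B_0^{(1)}$ need not be concave and the naive induction breaks down.

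To bypass this obstruction I would use the full three-variable concavity through a Hessian-energy estimate. Because $(B_0)'_A(0,v,A)=0$, one can write $(B_0)'_A(u,v,A)=\int_0^u B_0''_{sA}(s,v,A)\,ds\ge c u^2 v$, and Cauchy--Schwarz combined with the concavity inequality $(B_0''_{sA})^2\le(-B_0''_{ss})(-B_0''_{AA})$ gives
\[
\int_0^u(-B_0''_{ss})(s,v,A)(-B_0''_{AA})(s,v,A)\,ds\;\ge\; c^2 u^3 v^2.
\]
Integrating in $A$ one replaces $\int_0^1(-B_0''_{AA})\,dA$ by $(B_0)'_A(s,v,0)-(B_0)'_A(s,v,1)$, and then bounds this difference from above using $(B_0)'_A(s,v,0)\le s$ (which follows from $B_0\le u$ and concavity in $A$) and from below using the already established $(B_0)'_A(s,v,1)\ge c s^2 v$. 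An integration by parts in $s$ controls $\int_0^u s(-B_0''_{ss})\,ds$ through $B_0(u,v,A)\le u$ and $B_0'_u(0,v,A)\le 1$, producing a closed inequality $c^2(uv)^2\le F(c,uv)$ on $\{uv\le 1\}$. Specializing to $uv=1$ and exploiting the scale invariance of the problem under $(u,v)\mapsto(\lambda u,v/\lambda)$ (which preserves every hypothesis) forces the inequality for every choice of the free parameter, and hence collapses it to $c=0$, contradicting $c>0$. The hardest step is clearly this last one, precisely because the non-concavity of $c u A$ precludes any direct iteration and the contradiction must be extracted from the interplay of all three second derivatives of $B_0$ simultaneously.
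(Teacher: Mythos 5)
Your opening steps are sound: the sandwich at $u=0$ gives $B_0(0,v,A)=0$, concavity in $u$ gives monotonicity of $B_0/u$, integrating the hypothesis on $(B_0)'_A$ over $A\in[0,1]$ and evaluating on $uv=1$ gives $B_0(u,v,A)\ge cuA$, and you are right that naively subtracting $cuA$ fails because $cuA$ is not concave. The difficulty begins with the Hessian-energy step. After you integrate the inequality $\int_0^u(-B_0''_{ss})(-B_0''_{AA})\,ds\ge c^2u^3v^2$ over $A\in[0,1]$, the resulting double integral $\int_0^u\!\int_0^1(-B_0''_{ss})(-B_0''_{AA})\,dA\,ds$ does \emph{not} reduce to $\int_0^u(-B_0''_{ss})\bigl[(B_0)'_A(s,v,0)-(B_0)'_A(s,v,1)\bigr]\,ds$, because $-B_0''_{ss}$ depends on $A$ and cannot be pulled out of the inner $dA$-integral; replacing $\int_0^1(-B_0''_{AA})\,dA$ by the endpoint difference is only legitimate if the other factor is $A$-independent. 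Secondly, the auxiliary bound $(B_0)'_A(s,v,0)\le s$ is not a consequence of $B_0\le u$ and concavity: a concave function of $A$ on $[0,1]$ with values in $[0,s]$ can have arbitrarily large derivative at $A=0$, and it is unclear that the joint three-variable concavity repairs this. Finally, the ``scale invariance forces $c=0$'' endgame is asserted without argument; the hypotheses are invariant under $(u,v)\mapsto(\lambda u,v/\lambda)$, so any inequality you derive will already be invariant under that scaling, and specializing to $uv=1$ adds no information --- one would need an actual quantitative estimate that degenerates, not merely an appeal to invariance.

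For contrast, the paper's proof is discrete and constructive rather than differential. It shows that any $B_0$ with the stated properties would, via the usual Bellman/telescoping argument, force the Carleson-type bound $c\sum_{I\subset I_0}\av{u}{I}^2\av{v}{I}\,\alpha_I|I|\le\int_{I_0}u$ for every joint-$A_2$ pair $(u,v)$ and every Carleson sequence $\{\alpha_I\}$. It then builds a counterexample by a stopping-time construction: one picks $u$ with $\int_{I_0}u<\infty$ but $\int_{I_0}M^d u=\infty$, builds $v$ from the bottom up so that $\av{u}{I}\av{v}{I}=1$ exactly on the stopping intervals (keeping the joint $A_2$ bound everywhere), and takes $\alpha_I$ to be the indicator of those stopping intervals. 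The left-hand side then dominates $\int M^d u=\infty$, a contradiction. This route bypasses entirely the pointwise Hessian manipulations you attempt, and it is, in a sense, the more informative one: it locates the obstruction in the non-integrability of the maximal function, not in any quantitative degeneracy of the Hessian.
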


\begin{proof}
Suppose it does exist. Take any sequence $\{\al_I\}, 0\le \al_I\le 1,$ enumerated by
dyadic lattice of $I_0:=[0,1]$ such that for any $J\in \cD$
\begin{equation}
\label{carse}
A_J:=\frac1{|J|}\sum_{I\in\cD\colon I\subset J} \al_I |I| \le 1\,.
\end{equation}
(Carleson property.)
Take any two functions $u, v$ on $[0,1]$  such that
\begin{equation}
\label{avav}
\av{u}{I}\cdot \av{v}{I} \le 1,\;\forall I\in\cD\,.
\end{equation}
Consider 
$$
\cB(I):= B_0 (\av{u}{I}, \av{v}{I}, A_I)\,.
$$
It is then easy to see using the properties of $B_)$ and Taylor's formula that
\begin{equation}
\label{ind10}
|I| \cB(I) - (|I_+| \cB(I_+) + |I_-| \cB(I_-) )\ge c\cdot\al_I |I|\cdot \av{u}{I}^2\cdot \av{v}{I} \,.
\end{equation}

Summing this up, we get
\begin{equation}
\label{ind11}
c\sum_{I\in \cD, \, I\subset I_0} \av{u}{I}^2\cdot \av{v}{I}\al_I |I| \le\cB(I_0) \le \int_0^1 u \,dx\,.
\end{equation}

\medskip

Now we construct functions $u,v$ and Carleson sequence $\al_I$ such that they satisfy the properties just mentioned, but such that 
\eqref{ind11} fails.  To do that we choose $u\ge 0$ on $I_0$  whose specifications will be made later. Let
$\cG_n$ be the family of maxima dyadic intervals inside $I_0$ such that
$$
\av{u}{I}\ge 3^n\,.
$$
Here are two facts, firstly:
\begin{equation}
\label{sv}
\forall I\in \cG_n\;\; \av{u}{I} \le 2\cdot 3^n\,,
\end{equation}
and secondly, if for any $J\in \cG_{n-1}$ we denote $\cG_n(J)$ those $I\in \cG_n$ that lie inside $J$, then
\begin{equation}
\label{sn}
\frac{|J\setminus\cup_{I\in \cG_n(J)} I |} {|J|} \ge \frac13\,.
\end{equation}
Inequality \eqref{sv} is obvious by maximality of intervals. Inequality \eqref{sn} the followslike that:
$$
\frac{\cup_{I\in \cG_n(J)}| I | \cdot 3^n }{|J|} \le \frac{\sum_{I\in \cG_n(J)}\int_I u\,dx }{|J|} \le \frac{\int_J u\,dx}{|J|} =\av{u}{J} \le 2\cdot 3^{n-1}\,.
$$
Therefore,
$$
\frac{\cup_{I\in \cG_n(J)}| I |  }{|J|}\le \frac23\,,
$$
and \eqref{sn} is proved.

Before choosing $u$ we will now choose $v$.  We build $v$ ``from bottom to top". 
Choose a large $n$, and on each $I\in \cG_n$  choose $v$ to be the same constant $3^{-n}$. At this moment \eqref{avav} is of course satisfied.

Now we consider $J\in \cG_{n-1}$, and we want to keep \eqref{avav} for this $J$.
If we would keep $v$ to be $3^{-n}$ on all $I\in\cG_n(J)$ and we put $v=3^{-n}$ on $J\setminus\cup_{I\in \cG_n(J)} I $, 
then notice that $\av{u}{J}$ drops $3$ times with respect to $\av{u}{I}$, but $\av{v}{J}$ does not drop 
with respect to $\av{v}{I}$, we would see that the product drops $3$ times. However, we want it not to drop at all. So we keep 
$v$ to be $3^{-n}$ on all $I\in\cG_n(J)$ and we put $v= 9\cdot 3^{-n}$ on $J\setminus\cup_{I\in \cG_n(J)} I $. 
This portion is at least $\frac13 |J|$. Therefore, even without extra help from $v$ on all $I\in\cG_n(J)$ we would have at this moment $\av{u}{J}\av{v}{J}\ge 1$. 
We want exactly $1$. So we choose $c\in (1,9)$ such that if $v$ on all $I\in\cG_n(J)$ is $3^{-n}$  and on $J\setminus\cup_{I\in \cG_n(J)} I $ it is $c\cdot 3^{-n}$, then
\begin{equation}
\label{avJ}
\av{u}{J}\av{v}{J} =1\,.
\end{equation}

On the top of that we have an absolute estimate for all $L\in\cD$ such that $L\subset J$ and $L$ not a subset of $\cup_{I\in \cG_n(J)} I $. In fact, 
for such $L$ we have $\av{u}{L} \le 3^n, \av{v}{L} \le 9\cdot 3^{-n}$, hence
\begin{equation}
\label{avL}
\av{u}{L}\av{v}{L} \le 9\,.
\end{equation}
 Now we already built $v$ on every $J\in \cG_{n-1}$. The passage from $\cG_{n-1}$ to $\cG_n$ repeats those steps. Finally we will be finishing with $v$ such that
 \eqref{avJ} holds for all $J\in \bigcup_{k=1}^n\cG_k$, and \eqref{avL} holds for all $L\in\cD$ such that $L$ is not equal to or being  inside of  any of the intervals $I\in \cG_n$.
 
 Making $n\to\infty$ we have $v$  such that
 \eqref{avJ} holds for all $J\in \bigcup_{k=1}^\infty\cG_k$, and \eqref{avL} holds for all $L\in\cD$.
 
 Now we notice two things: we can multiply $v$ by $1/9$ to have $1$ in the right hand side of \eqref{avL}, and we can put
 $$
 \al_I := \begin{cases} 1/3\,,\;\; I \in \bigcup_{k=1}^\infty\cG_k
 \\
 0\,,\;\;\text{otherwise}\,.
 \end{cases}
$$

Then by \eqref{sn} it is a Carleson sequence in the sense of \eqref{carse}. Now let us disprove \eqref{ind11} by the choice of $u$. The left hand side of \eqref{ind11} now can be written (by looking at \eqref{avJ}) as
$$
\sum_{I \in \bigcup_{k=1}^\infty\cG_k}\av{u}{I}^2\av{v}{I} \al_I |I| = \frac1{27} \sum_{k=1}^\infty\sum_{I \in \cG_k}\av{u}{I} |I|\,.
$$

Let for those $I$ consider $E_I:= I\setminus \bigcup_{\ell\in \cG_{k+1}}\ell$.  Let $M^d$ means dyadic maximal operator. Then on those $I$ we have $M^d u\le 3^{k+1}$, $\av{u}{I}\ge 3^k$, and $E_I$ tile $I_0$.
Hence
$$
\sum_{k=1}^\infty\sum_{I \in \cG_k}\av{u}{I} |I|\ge \sum_{k=1}^\infty\sum_{I \in \cG_k}\av{u}{I} |E_I|\ge \frac13 \int_{I_0} (M^d u)(x)\, dx\,.
$$
Here is the choice of $u$ that finally violates \eqref{ind11}: $u$ must be such that $\int_0^2 M^d u\, dx =\infty, \int_0^1 u\, dx <\infty$. Lemma is proved.
\end{proof}

Theorem \ref{shep} is proved.

\section{Appendix: translating from \cite{CURV} to our language}
\label{appendix}
For simplicity we focus only on the case $p=2$. In \cite{CURV} the authors considered a bump funtcion $A(t)$ and studied the norm $\|u^{\frac12}\|_A$. In our language, $\Phi(t)=A(t^{\frac12})$. Then we notice that
$$
\|u^{\frac12}\|_A = \|u\|_{\Phi}^\frac12.
$$
This follows from the definition of the Orlitz norm. 

Suppose now that we chose a function $\Phi_0$ with $\Psi_0(s)\leqslant \Psi(s)\ep(\Psi(s))$. We have peoved that it implies the following improvement of Orlitz norm:
$$
\|u\|_{\Phi_0}\leqslant C \|u\|_{\Phi} \cdot \ep\bigg(\frac{\|u\|_{\Phi}}{\ave{u}}\bigg).
$$
Translating it to the language of \cite{CURV}, we get (after taking square roots of both sides)
$$
\|u^\frac12\|_{A_0} \leqslant C \|u^{\frac12}\|_A \cdot \left(\ep\bigg(\frac{\|u^{\frac12}\|_{A}^2}{\|u^{\frac12}\|_{L^2}^2}\bigg)\right)^{\frac12}.
$$
Thus, the Orlisz norms for $A$ and $A_0$ improve in the following way:
$$
\|u^\frac12\|_{A_0} \leqslant C \|u^{\frac12}\|_A \cdot \ep_{A_0, A}\bigg(\frac{\|u^{\frac12}\|_{A}}{\|u^{\frac12}\|_{L^2}}\bigg), 
$$
where
$$
\ep_{A_0, A}(t) = \sqrt{\ep(t^2)}.
$$

Thus, our integral condition on $\ep$ gives the following condition on $\ep_{A_0, A}$:
$$
\ili^{\infty} \frac{\ep_{A_0, A}(\sqrt{t})^2}{t}dt\leqslant \infty.
$$
We denote $y=\sqrt{t}$, thus $dy/y = dt/t$. And we get
$$
\ili^{\infty} \frac{\ep_{A_0, A}(y)^2}{y}dt <\infty.
$$

However, the proof of the results from \cite{CURV} gives a condition
$$
\ili^{\infty} \frac{\ep_{A_0, A}(y)}{y}dt <\infty.
$$
We notice that $\ep_{A_0, A}(y)$ is small at infinity, and thus $\ep_{A_0, A}(y)^2 < \ep_{A_0, A}$. Therefore, our result gives the result of \cite{CURV} and improves it.

\end{document}